\newtheorem{assumption}{Assumption}[section]
\newtheorem{remark}{Remark}[section]
\newtheorem{lemma}{Lemma}[section]
\newtheorem{definition}{Definition}[section]
\newtheorem{theorem}{Theorem}[section]
\definecolor{LightCyan}{rgb}{0.88,1,1}
\renewcommand{\paragraph}[1]{\vspace{0.1em}\noindent \textbf{#1}}
\title{Asynchronous Optimization Methods for Efficient Training of Deep Neural Networks with Guarantees}
\date{}
\author{	
	Vyacheslav Kungurtsev\thanks{Support for this author was provided by the OP VVV project CZ.02.1.01/0.0/0.0/16\_019/0000765 
		``Research Center for Informatics''} \\
	Department of Computer Science\\
	Czech Technical 
	University in Prague\\
	\texttt{kunguvya@fel.cvut.cz}
	\And
	Malcolm Egan \\University of Lyon\\ INSA Lyon, INRIA \\{\tt malcom.egan@inria.fr}
	\And
	Bapi Chatterjee\thanks{Supported by the European Union's Horizon 2020 
		research and innovation programme under the Marie Skodowska-Curie grant 
		agreement No. 754411 (ISTPlus).}\\
	Institute of Science and Technology\\ 
	Austria \\
	\texttt{bapi.chatterjee@ist.ac.at}\\
	\And
	Dan Alistarh\thanks{This project has received funding from the European Research Council (ERC) under the European Union’s Horizon 2020 research and innovation programme (grant agreement No 805223).}\\
	Institute of Science and Technology\\ Austria \\
	\texttt{dan.alistarh@ist.ac.at} \\
}
\newcommand{\asm}{\textbf{ASSM}\xspace}
\newcommand{\pasm}{\textbf{PASSM}\xspace}
\newcommand{\pasmp}{\textbf{PASSM$+$}\xspace}
\newcommand{\sgd}{\textbf{SGD}\xspace}
\begin{document}




\maketitle

\begin{abstract}
Asynchronous distributed algorithms are a popular way to reduce synchronization costs in large-scale optimization, and in particular for neural network training. However, for nonsmooth and nonconvex objectives, few convergence guarantees exist beyond cases where closed-form proximal operator solutions are available. 
As most popular contemporary deep neural networks lead to nonsmooth and nonconvex objectives, there is now a pressing need for such convergence guarantees. In this paper, we analyze for the first time the convergence of stochastic asynchronous optimization for this general class of objectives. In particular, we focus on stochastic subgradient methods allowing for block variable partitioning, where the shared-memory-based model is asynchronously updated by concurrent processes. To this end, we first introduce a probabilistic model which captures key features of real asynchronous scheduling between concurrent processes; under this model, we establish convergence with probability one to an invariant set for stochastic subgradient methods with momentum. 

From the practical perspective, one issue with the family of methods we consider is that it is not efficiently supported by machine learning frameworks, as they mostly focus on distributed data-parallel strategies. 
To address this, we propose a new implementation strategy for  shared-memory based training of deep neural networks, whereby concurrent parameter servers are utilized to train a partitioned but shared model in single- and multi-GPU settings. 
Based on this implementation, we achieve on average ${\sim}1.2$x speed-up in comparison to state-of-the-art training methods for popular image classification tasks without compromising accuracy. 

\end{abstract}



\section{Introduction}
Training deep neural networks is a difficult problem in several respects \cite{goodfellow2016deep}. First, with multiple layers and nonlinear activation functions such as sigmoid and softmax functions, the resulting optimization problems are nonconvex. Second, ReLU activation functions and max-pooling in convolutional networks induce nonsmoothness, i.e., the objective is not differentiable everywhere. Finally, in applications it is often unreasonable to store entire data sets in memory in order to compute the objective or subgradients. As such, it is necessary to exploit stochastic methods. 

Machine learning applications, including training deep neural networks, have also motivated optimization algorithms that use high performance computing parallel architectures. In this paper, we focus on the shared-memory paradigm, although, as we show, our results can be efficiently extended to realistic distributed settings. Recent interest in this topic was sparked by \cite{recht2011hogwild}, although precursors exist. Later work in \cite{liu2015asynchronous,lian2015asynchronous} refined this analysis and generalised to nonconvex \emph{smooth} problems, under restricted scheduling models. 
Subsequently,~\cite{cannelli2016asynchronous} introduced a more general
and realistic probabilistic model of asynchronous computation on shared memory architectures, and analyzed an algorithm based on successive convex approximation and coordinate-wise updates. 

Asynchronous proximal gradient methods have been studied in \cite{zhu2018asynchronous,li2018simple} for problems of the form $f(x) + g(x)$, where $f(x)$ is smooth and nonconvex, and $g(x)$ is nonsmooth but with an easily computable closed form prox expression. In particular, expected rates of convergence were established. This class of problems is only relevant for training deep neural networks with neither ReLU activation functions nor max-pooling. That is, every activation function must be smooth. However, current popular deep neural network architectures make extensive use of nonsmooth activations, and thus all of the previous literature on convergence, speedup, etc. on asynchronous parallel SGD does not apply to these objectives, when considered from the standpoint of mathematical rigor. This leaves a clear gap between practical methods and their convergence guarantees.

This gap is understandable, given that the general problem of nonsmooth and nonconvex stochastic optimisation is notoriously difficult~\cite{bagirov2014introduction}.  A standard framework to establish convergence of stochastic algorithms in the centralized sequential setting is stochastic approximation, with early work in \cite{ermol1998stochastic,ruszczynski1987linearization} and comprehensive surveys in \cite{kushner2003stochastic} and \cite{borkar2008stochastic}. In \cite{davis2018stochastic,majewski2018analysis}, stochastic approximation for (sequential) nonsmooth and nonconvex problems has been recently developed, motivated by training deep neural networks.

In this paper, we take a first step towards bridging the gap and
establish the convergence of stochastic subgradient descent for nonsmooth and nonconvex problems in a realistic asynchronous computation framework. 
In particular, we show that generic asynchronous stochastic subgradient methods converge with probability one for a general class of nonsmooth and nonconvex problems. 
Aside from the limited discussion in~\cite[Chapter 12]{kushner2003stochastic}, this is the first result for this class of algorithms, combining the state of the art in stochastic approximation with that in asynchronous computation. 
In addition, inspired by the success of momentum methods~\cite{zhang2017yellowfin}, we also establish for the first time convergence of stochastic subgradient descent with momentum in the context of asynchronous computation.

\begin{wrapfigure}{r}{8.1cm}
	\centering\small\vspace{-5pt}	
	\begin{tabular}{p{21mm}|p{12mm}p{6mm}|p{12mm}p{9mm}}
		\toprule
		Algorithm & Val top-1{\newline}Accuracy & Time{\newline}(Sec) & Val top-1{\newline}Accuracy & Time{\newline}(Sec)\\
		\midrule
		\multicolumn{1}{c}{}&\multicolumn{2}{c}{CIFAR10}& \multicolumn{2}{c}{CIFAR100}\\
		\midrule
		\asm & 92.44$\pm$0.3 &  1803 & 68.66$\pm$0.2 &  1807 \\
		\pasm & 91.03$\pm$0.3 &  992 & 61.26$\pm$0.5 &  1258 \\
		\rowcolor{LightCyan}
		{\pasmp} & \textbf{92.7$\pm$0.1} & \textbf{1526} & \textbf{69.28$\pm$0.3} & \textbf{1541}\\
		\sgd   & 92.76 & 2024 & 69.02 & 2022 \\
		\sgd (BS:1024)  &  92.57 & 1908 &  68.51 & 1762 \\
		\bottomrule
	\end{tabular}\caption{\small Resnet20 training for 300 epochs on a Nvidia GeForce RTX 2080 Ti. Batch-size of 256 was taken. For large-batch training, we follow \cite{goyal2017accurate}. Asynchronous training uses 4 concurrent processes. Standard hyperparameter values \cite{he2016deep} were applied.}
\end{wrapfigure}\label{tab:summary}


We complement the convergence analysis with a new efficient implementation strategy. 
Specifically, our main convergence result applies to an \textbf{A}synchronous \textbf{S}tochastic \textbf{S}ubgradient \textbf{M}ethod (\textbf{\asm}), where each process updates all of the model parameters, and each partition is protected from concurrent updates by a lock. 
 A variation is to assign \emph{non-overlapping partitions} of the model to processes, which we call \textbf{P}artitioned \asm (\textbf{\pasm}). This prevents overwrites, and allows us to update the model in a lock-free manner. In practice, \asm updates the entire model, thus has an equivalent computation cost to the sequential minibatch \textbf{SGD}. By contrast, \pasm needs to compute block-partitioned stochastic subgradients. To implement this efficiently, we perform ``restricted'' backpropagation (see details in Section \ref{sec:exp}), which can provide savings proportional to the size of subgradients.  
 
 Although \pasm is fast, in practice, it does not always recover  validation (generalization) accuracy. To address this, we propose another specialization of \asm which  interleaves \pasm and \asm steps, which we call \textbf{\pasmp}. From a technical perspective, the novelty of \pasmp is to exploit concurrency to save on the cost of computation and synchronization without compromising  convergence and generalization performance. A sample of our performance results for our Pytorch-based implementation \cite{paszke2017automatic} is given in Figure \ref{tab:summary}. \pasmp matches the baseline in terms of generalization and yet provides on average ${\sim}1.35$x speed-up for identical minibatch size. At the same time, it achieves ${\sim}1.2$x speed-up against a large-batch training method~\cite{goyal2017accurate}, with better generalization. The method is applicable to both shared-memory (where multiple processes can be spawned inside the same GPU up to its computational saturation) as well as in the standard multi-GPU settings.

\section{Problem Formulation}

Consider the minimization problem
\begin{equation}\label{eq:prob}
\min_{x \in \mathbb{R}^n} f(x),
\end{equation}
where $f:\mathbb{R}^n\to \mathbb{R}$ is locally Lipschitz continuous (but potentially nonconvex and nonsmooth) and furthermore, it is computationally infeasible
to evaluate $f(x)$ or an element of the Clarke subdifferential $\partial f(x)$. 

In the machine learning context, $f(x)$ corresponds to a loss function evaluated on a model with parameters denoted by $x \in \mathbb{R}^n$, dependant on input data $A\in\mathbb{R}^{n\times m}$ and target values $y\in\mathbb{R}^m$ of high dimension. That is, $f(x)=f(x;(A,y))$, where $x$ is a parameter to optimize with respect to a loss function $\ell:\mathbb{R}^m\times\mathbb{R}^m\to \mathbb{R}$. Neural network training is then achieved by miminizing the empirical risk, where $f$ admits the decomposition
\[
f(x)=\frac{1}{M}\sum_{i=1}^M \ell(m(x;A_i);y_i)
\]
and $\{(A_i,y_i)\}_{i=1}^M$ is a partition of $(A,y)$.

We are concerned with algorithms that solve the general problem in \eqref{eq:prob} in a distributed fashion over shared-memory; i.e.,
using multiple concurrent processes. Typically, a process uses a CPU core for computation over the CPU itself or to have a client connection with an accelerator such as a GPU. Hereinafter, we shall be using the terms core and process interchangeably. 

In particular, we focus on the general \emph{inconsistent read} scenario: before computation begins, each core $c \in \{1,\ldots,\overline{c}\}$ is allocated a block of variables $I^c \subset \{1,2,\ldots,n\}$, for which it is responsible to update. At each iteration the core modifies a block of variables $i^k$, chosen randomly among $I^c$. Immediately after core $c$ completes its $k$-th iteration, it updates the model parameters over shared memory. The shared memory allows concurrent-read-concurrent-write access. The shared-memory system also offers word-sized atomic \texttt{read} and \texttt{fetch-and-add (faa)} primitives. Processes use \texttt{faa} to update the components of the model. 

A lock on updating the shared memory is only placed when a core writes to it, and hence the process of reading may result in computations based on variable values that never existed in memory. Such a scenario arises when block $1$ is read by core $1$, then core $3$ updates block $2$ while core $1$ reads block $2$. In this case, core $1$ computes an update with the values in blocks $1$ and $2$ that are inconsistent with the local cache of core $3$. 

We index iterations based on when a core writes a new set of variable values into memory. Let $\mathbf{d}^{k_c}=\{d^{k_c}_1,...,d^{k_c}_n\}$ be the vector
of delays for each component of the variable used by core $c$ to evaluate a subgradient estimate, thus the $j$-th component of $x^{k_c} = (x_1^{d_1^{k_c}},\ldots,x_n^{d_n^{k_c}})$ that is used in the computation of the update at $k$ may be associated with a different delay than the $j'$-th component.

In this paper, we study stochastic approximation methods, of which the classic stochastic gradient descent forms a special case. Since $f$ in~\eqref{eq:prob} is in general nonsmooth and nonconvex, we exploit generalized subgradient methods. Denote by $\xi^{k_c}$ the mini-batches (i.e., a subset of the data $\{(A_i,y_i)\}_{i=1}^M$) used by core $c$ to compute an element of the subgradient $g_{i^k}(x^{k_c} ;\xi^{k_c})$. Consistent with standard empirical risk minimization methods \cite{Bottou2018}, the set of mini-batches $\xi^{k_c}$ is chosen uniformly at random from $(A,y)$, independently at each iteration. 



Consider the general stochastic subgradient algorithm under asynchronous updating and momentum in Algorithm~\ref{alg:assdone}, presented from the perspective of the individual cores. We remark that stochastic subgradient methods with momentum have been widely utilized due to their improved performance \cite{mitliagkas2016asynchrony,zhang2017yellowfin}. In the update of the $k_c$-th iterate by core $c$, we let $0 < m < 1$ be the momentum constant, $\gamma^{k_c}$ is the step-size and $g_{i^k}(x^{k_c} ;\xi^{k_c})$ is an unbiased estimate of the Clarke subgradient at the point $x_{i^{k_c}}^{k_c}$. The variable $u_{i^{k_c}}^{k_c}$ is the weighted sum of subgradient estimates needed to introduce momentum.


We make the following assumptions on the delays and the stochastic subgradient estimates.
\begin{assumption}\label{as:probsdels}
	There exists a $\delta$ such that $d^{k_c}_j\le \delta$ for all $j$ and $k$. Thus each $d^{k_c}_j\in \mathcal{D}\triangleq \{0,...,\delta\}^n$.
\end{assumption}

\begin{assumption}\label{as:subg}
	The stochastic subgradient estimates $g(x,\xi)$ satisfy
	\begin{enumerate}
		\item[(i)] $\mathbb{E}_\xi\left[g(x;\xi)\right]\in \partial f(x)+\beta(x)$
		\item[(ii)] $\mathbb{E}_\xi\left[\text{dist}(g(x;\xi),\partial f(x))^2\right] \le \sigma^2$
		\item[(iii)] $\|g(x;\xi)\|\le B_g$ w.p.1.
	\end{enumerate}
\end{assumption}

The term $\beta(x)$ in (ii) of Assumption~\ref{as:subg} can be interpreted as a bias term, which can arise when the Clarke subgradient is not a singleton; e.g., when $f(x) = |x|,~x = 0$. In general, $\beta(x)$ is zero if $f(\cdot)$ is continuously differentiable at $x$.

\begin{algorithm}[H]
\caption{Asynchronous Stochastic Subgradient Method for an Individual Core}\label{alg:assdone}
\hspace*{-5cm}
\begin{algorithmic}[1]
\STATE \textbf{Input:} $x_0$, core $c$. 
\WHILE{Not converged}
\STATE Sample $i^{k_c}$ from the variables $I_c$ corresponding to core $c$. 
\STATE Sample $\xi^{k_c}$ from the data $(A,y)$.
\STATE Read $x_{i^{k_c}}^{k_c}$ from the shared memory.
\STATE Compute the subgradient estimate $g_{i^k}(x^{k_c} ;\xi^{k_c})$. 
\STATE Write to the momentum vector 
$u^{k_c+1}_{i^{k_c}} \leftarrow m u^{k_c}_{i^{k_c}}+g_{i^k}(x^{k_c};\xi^{k_c})$ in private memory.
\STATE Write, with a lock, to the shared memory vector partition 
$x_{i^{k_c}}^{k_c+1} \leftarrow x_{i^{k_c}}^{k_c}-(1-m)\gamma^{k_c}u_{i^{k_c}}$.
\STATE $k_c \leftarrow k_c+1$.
\ENDWHILE
\end{algorithmic}
\end{algorithm}

\subsection{Continuous-Time Reformulation}\label{sec:cont_time}

Any realistic asynchronous system operates in continuous-time, and furthermore the Stochastic Approximation framework relies on discrete iterates converging to some continuous flow. As such, we now sketch the key assumptions on the continuous-time delays of core updates. Consider Algorithm \ref{alg:assdone}, dropping the core label $k_c$, considering a particular block $i$ and using a global iteration counter $k$. More details on the reformulation are given in Appendix Section \ref{app:subsec:reformulation}. We write,
\begin{align}\label{eq:eachitersa}
x_{i^{k+1}}^{k+1} = x_{i^k}^{k+1} + (1-m)\gamma^{k,i} \sum_{j=1}^k m^{k-j} Y_{j,i},
\end{align}
where $Y_{j,i}$ is based on an estimate of the partial subgradient with respect to block variables indexed by $i$ at local iteration $j$. 

In the context of Algorithm~\ref{alg:assdone}, the step size is defined to be the subsequence
$\{\gamma^{k,i}\}=\{\gamma^{\nu(c(i),l)}:i=i^l\}$ where $l$ is the iteration index
for the core corresponding to block $i$. That is, the step size sequence forms the subsequence of $\gamma^k$ for which $i^k=i$ is the block of variables being modified. 

The term $Y_{k,i}$ corresponds to $g(x_{k},\xi)$ and satisfies, 
\[
Y_{k,i} = g_{i} ((x_{k-[d^k_{i}]_1,1},...,x_{k-[d^k_i]_j,j},...,x_{k-[d^k_i]_n,n}))+\beta_{k,i}+\delta M_{k,i},
\]
where $g_{i}(x)$ denotes a selection of an element of the subgradient, with respect to block $i$, of $f(x)$. The quantity $\delta M_{k,i}$ is a martingale difference sequence.

In order to translate the discrete-time updates into real-time updates, we now consider an interpolation of the updates. This is a standard approach \cite{kushner2003stochastic}, which provides a means of establishing that the sequence of iterates converges to the flow of a differential inclusion. 

Define $\delta \tau_{k,i}$ to be the real elapsed time between iterations $k$ and $k+1$ for block $i$. Let $T_{k,i} = \sum_{j=0}^{k-1} \delta \tau_{j,i}$ and define for $\sigma\ge 0$, $p_l(\sigma) = \min\{j: T_{j,i}\ge \sigma\}$ to be the first iteration at or after $\sigma$. We assume that the step-size sequence comes from an underlying real function; i.e.,
\[
\gamma^{k,i} = \frac{1}{\delta \tau_{k,i}}\int_{T_{k,i}}^{T_{k,i}+\delta \tau_{k,i}} \gamma(s) \mathrm{d}s,\,\,\,\,\text{satisfying,}
\]
\begin{equation}\label{eq:stepsizeas}
\begin{array}{l}
\int_0^\infty \gamma(s)\mathrm{d}s=\infty,\text{ where }0<\gamma(s)\to 0\text{ as }s\to\infty,\\
\text{There are }T(s)\to \infty\text{ as }s\to \infty\text{ such that }
 \lim_{s\to\infty}\sup_{0\le t\le T(s)}\left|\frac{\gamma(s)}{\gamma(s+t)}-1\right|=0
\end{array}
\end{equation}

We now define two $\sigma$-algebras $\mathcal{F}_{k,i}$ and $\mathcal{F}^+_{k,i}$, which measure the random variables 
\[
\begin{array}{l}
\left\{\{x_0\},\{Y_{j-1,i}:j,i \text{ with } T_{j,i}<T_{k+1,i}\},\{T_{j,i}:j,i\text{ with }T_{j,i}\le T_{k+1,i}\}\right\},\text{ and,} \\
\left\{\{x_0\},\{Y_{j-1,i}:j,i \text{ with } T_{j,i}\le T_{k+1,i}\},\{T_{j,i}:j,i\text{ with }T_{j,i}\le T_{k+1,i}\}\right\},
\end{array}
\] 
indicating the set of events up to, and up to and including the computed noisy update at $k$, respectively. 

For any sequence $Z_{k,i}$, we write $Z^\sigma_{k,i}=Z_{p_i(\sigma)+k,i}$, where $p_i(\sigma)$
is the least integer greater than or equal to $\sigma$. Thus, let $\delta \tau_{k,i}^\sigma$ denote the inter-update times for block $i$ starting at the first update at or after $\sigma$, and $\gamma_{k,i}^\sigma$ the associated step sizes. Now let $x^\sigma_{0,i}=x_{p_i(\sigma),i}$ and for $k\ge 0$, $x^\sigma_{k+1,i}=x^\sigma_{k,i}+(1-m)\gamma^\sigma_{k,i} \sum_{j=1}^k m^{k-j} Y^\sigma_{j,i}$. Define $t^\sigma_{k,i}=\sum_{j=0}^{k-1} \gamma^\sigma_{j,i}$ and $\tau^\sigma_{k,i}=\sum_{j=0}^{k-1} \gamma^\sigma_{j,i} \delta \tau^\sigma_{j,i}$. Piece-wise constant interpolations of the vectors in real-time are then given by
\[
x^\sigma_i(t)=x^\sigma_{k,i},\quad t\in[t^\sigma_{k,i},t^\sigma_{k+1,i}],\quad
N^\sigma_{i}(t)=t^\sigma_{k,i},\quad t\in[\tau^\sigma_{k,i},\tau^\sigma_{k+1,i})
\]
and $\tau^\sigma_i(t)=\tau^\sigma_{k,i}$ for $t\in[t^\sigma_{k,i},t^\sigma_{k+1,i}]$. We also define,
\begin{align}
    \hat x^\sigma_i(t)=x^\sigma_i(N^\sigma_i(t)),~~~t \in [\tau^{\sigma}_{k,i},\tau^{\sigma}_{k+1,i}).
\end{align}
Note that
\[
N^\sigma_i(\tau^\sigma_i(t))=t^\sigma_{k,i},\,  t\in[t^\sigma_{k,i},t^\sigma_{k+1,i}].
\]

We now detail the assumptions on the real delay times, which ensure that the real-time delays do not grow without bound, either on average, or with substantial probability. 

\begin{assumption}\label{as:uniformintdt}
$\{\delta \tau^\sigma_{k,i};k,i\}$ is uniformly integrable.
\end{assumption}

\begin{assumption}\label{as:delaymean} 
There exists a function $u^\sigma_{k+1,i}$ and random variables $\Delta^{\sigma,+}_{k+1,i}$ and a random sequence
$\{\psi^\sigma_{k+1,i}\}$ such that 
\[
\mathbb{E}^{+}_{k,i} [\delta \tau^\sigma_{k+1,i}] = u^\sigma_{k+1,i} (\hat x_i^\sigma(\tau^\sigma_{k+1,i}-\Delta^{\sigma,+}_{k+1,i}),\psi^\sigma_{k+1,i})
\]
and 
there is a $\bar u$ such that for any compact set $A$,
\[
\lim_{m,k,\sigma}\frac{1}{m} \sum_{j=k}^{k+m-1}\mathbb{E}^\sigma_{k,i}[u^\sigma_{j,i}(x,\psi^\sigma_{k+1,i})-\bar u_i(x)] I_{\{\psi^\sigma_{k+1,i}\in A\}}=0
\]
\end{assumption}
\begin{assumption}\label{as:bias}
	\begin{align}\label{eq:cond2}
	\lim_{m,k,\sigma} \frac{1}{m} \sum_{j = k}^{k+m-1} \mathbb{E}_{k,i}^{\sigma}[\beta_{j,i}^{\sigma}]  = 0~\text{in mean.}
	\end{align}
	\end{assumption}

Under the other assumptions in this section, we note that Assumption~\ref{as:bias} holds if the set of $x$ for which $f(\cdot)$ is not continuously differentiable at $x$ is of measure zero. This is the case for objectives arising from a wide range of deep neural network architectures \cite{davis2018stochastic}.

\section{Main Convergence Results}

We now present our main convergence result. The proof is available in  Appendix Section \ref{app:sec:proof}.
\begin{theorem}\label{thrm:main_convergence}
Suppose Assumptions~\ref{as:probsdels}, \ref{as:subg}, \ref{as:uniformintdt}, \ref{as:delaymean}, \ref{as:bias} hold.

Then, the following system of differential inclusions,
\begin{equation}\label{eq:sol}
\tau_i(t)=\int_0^t \bar u_i(\hat x(\tau_i(s))) \mathrm{d}s, \quad 
\dot x_i(t)\in \partial_i f(\hat x(\tau_i(t))),\quad 
\dot{\hat{x}}_i(t) \bar u_i(\hat x) \in \partial_i f(\hat x(t))
\end{equation}
holds for any $\overline{u}$ satisfying~\ref{as:delaymean}.
On large intervals $[0,T]$, $\hat x^\sigma(\cdot)$ spends nearly all of its time, with the fraction going to one as $T\to \infty$ and $\sigma\to \infty$ in a small neighborhood of a bounded invariant set of
\begin{equation}\label{eq:di}
\dot{\hat{x}}_i(t)\in\partial_i f(x(t))
\end{equation}
\end{theorem}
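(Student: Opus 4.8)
The plan is to follow the ODE method of stochastic approximation in its asynchronous, differential-inclusion form, as sketched in \cite[Chapter 12]{kushner2003stochastic}, adapting it to the nonsmooth and nonconvex setting by replacing gradients with selections from the Clarke subdifferential $\partial f$ and leaning on its upper semicontinuity. The objects of study are the shifted piecewise-constant interpolations $\hat x^\sigma_i(\cdot)$ together with the time-change processes $\tau^\sigma_i(\cdot)$ and $N^\sigma_i(\cdot)$; the goal is to show that every weak limit point as $\sigma\to\infty$ solves the system \eqref{eq:sol}, and then to read off the invariant-set conclusion for \eqref{eq:di}. First I would establish tightness and (extended) equicontinuity of these interpolations. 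The a.s.\ bound $\|g(x;\xi)\|\le B_g$ from Assumption~\ref{as:subg}(iii) controls each increment, while the bounded delays of Assumption~\ref{as:probsdels} and the uniform integrability of the real inter-update times from Assumption~\ref{as:uniformintdt} guarantee that the increments of $\hat x^\sigma_i$, $\tau^\sigma_i$, and $N^\sigma_i$ over a fixed window are uniformly small as the window shrinks and $\sigma$ grows. An Arzel\`a--Ascoli/tightness argument then extracts subsequences with Lipschitz, hence absolutely continuous, limits.

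Next I would decompose the driving term $Y_{k,i}=g_i(\cdot)+\beta_{k,i}+\delta M_{k,i}$ and show that the latter two contributions vanish in the limit. For the martingale difference, Assumption~\ref{as:subg}(ii)--(iii) yields a step-size-weighted quadratic variation that is controlled, so a standard martingale estimate drives its interpolated contribution to zero a.s. For the bias, Assumption~\ref{as:bias} states precisely that the Cesàro averages of $\beta^\sigma_{j,i}$ vanish in mean. The momentum term needs separate treatment: since $0<m<1$, the weights $(1-m)\sum_{j=1}^{k}m^{k-j}$ sum to nearly one and concentrate on the most recent $O(1)$ local iterates, whose effective real-time span shrinks as $\gamma(s)\to 0$; invoking the slow-variation property \eqref{eq:stepsizeas} of the step-size function, the momentum-averaged subgradient collapses to a subgradient evaluated along the limiting trajectory, so momentum does not alter the limit dynamics.

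With noise, bias, and momentum handled, I would pass to the limit using upper semicontinuity of $\partial f$. Along a convergent subsequence, the delayed and inconsistently-read arguments $x_{k-[d^k_i]_j,j}$ all converge to the common limit point, because the bounded delays of Assumption~\ref{as:probsdels} span a vanishing real-time interval under \eqref{eq:stepsizeas}; the closed-graph property of the Clarke subdifferential then forces the limiting velocity to lie in $\partial_i f$ at that point, giving the inclusions in \eqref{eq:sol}. The time-change relation $\tau_i(t)=\int_0^t \bar u_i(\hat x(\tau_i(s)))\,\mathrm{d}s$ follows from Assumption~\ref{as:delaymean}, whose Cesàro-mean condition identifies the limit of the inter-update times with $\bar u_i$. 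Finally, the occupation statement for \eqref{eq:di} follows from the theory of limit sets of differential inclusions: once $\hat x^\sigma(\cdot)$ is shown to track solutions of the inclusion, its asymptotic occupation measure concentrates on a bounded, internally chain-transitive (hence invariant) set of the flow of \eqref{eq:di}, which is exactly the claimed conclusion.

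I expect the principal obstacle to be the simultaneous treatment of momentum and the inconsistent, delayed reads under the asynchronous time-change. One must show that the geometric momentum memory and the bounded but component-dependent delays $[d^k_i]_j$ both collapse onto a \emph{single} evaluation point of $\partial_i f$ in the limit, which requires a delicate coupling of the slow variation of $\gamma(\cdot)$ via \eqref{eq:stepsizeas}, the uniform integrability of the real delays from Assumption~\ref{as:uniformintdt}, and the mean-delay characterization from Assumption~\ref{as:delaymean}, all while preserving the upper-semicontinuity argument that is the only available handle on the nonsmoothness. Keeping these estimates uniform across blocks $i$, so that the per-block time-changes $\tau_i$ assemble into a single consistent limiting flow, is where the analysis is most demanding.
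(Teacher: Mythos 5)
Your proposal follows essentially the same route as the paper's proof: tightness of the interpolated processes from the bounded subgradients and uniformly integrable inter-update times, the decomposition of the increment into drift, bias, and martingale-difference parts (with the bias killed by Assumption~\ref{as:bias} and the martingale part by the standard weak-convergence argument), the collapse of the geometric momentum sum onto $\frac{1}{1-m}$ times the current subgradient, the vanishing of the delayed-read discrepancy via step-size sums over the bounded delay window, upper semicontinuity of the Clarke subdifferential to identify the limiting velocity, and Assumption~\ref{as:delaymean} to identify the time change $\bar u_i$. The plan is correct and matches the paper's argument in both structure and the role assigned to each assumption.
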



Theorem~\ref{thrm:main_convergence} provides conditions under which the time-shifted interpolated sequence of iterates converges weakly to an invariant set of a DI. This result can be strengthened to convergence with probability one to a block-wise stationary point via modification of the methods in \cite{dupuis1989stochastic}. Details of the proof are available in Appendix Section \ref{app:subsec:convproof} along with further discussion on properties of the limit point in Appendix Section \ref{app:subsec:limpoint}. 

We now discuss this result. Consider three variations of stochastic subgradient methods discussed in the introduction: 1) standard sequential \sgd with momentum, 2) \textbf{PASSM}, where each core updates only a block subset $i$ of $x$ and is lock-free, 
and 3) \textbf{ASSM}, which defines the standard parallel asynchronous implementation in which every core updates the entire vector $x$, using a write lock to prevent simultaneous modification of the vectors. 

\sgd only has asymptotic convergence theory (i.e., as opposed to convergence rates) to stationary points for general nonconvex nonsmooth objectives, in particular those characterizing deep neural network architectures. All previous works considering asynchronous convergence for such objectives  assume that the objective is either continuously differentiable, or the nonsmooth term is simple, e.g., an added $\ell_1$ regularization, and thus not truly modeling DNN architectures. While these results shed insight into variations of SGD, the focus of this paper is to attempt to expand the state of the art to realistic DNNs under complete mathematical rigor. 

By simply taking the block $i$ to be the entire vector, \textbf{ASSM} can be taken to a special case of \textbf{PASSM}, and thus Theorem~\ref{thrm:main_convergence} proves asymptotic convergence for both schemes. The Theorem points to their comparative advantages and disadvantages: 1) in order for indeed Theorem~\ref{thrm:main_convergence} to apply to \textbf{ASSM}, write locks are necessary, thus limiting the potential time-to-epoch speedup, however, 2) whereas if $i$ is the entire vector the limit point of \textbf{ASSM} is a stationary point, i.e., a point wherein zero is in the Clarke subdifferential of the limit, in the case of \textbf{PASSM}, the limit point is only coordinate-wise stationary, zero is only in the $i$ component subdifferential of $f$. In the smooth case, these are identical, however, this is not necessarily the case in the nonsmooth case. Thus \textbf{PASSM}, relative to \textbf{ASSM} can exhibit faster speedup allowing a greater advantage of HPC hardware, however, may converge to a weaker notion of stationarity and thus in practice a higher value of the objective. 


\section{Numerical Results}\label{sec:exp}

In this section we describe the implementation and experimental results of the specialized variants, called \asm, \pasm, and \pasmp, of the shared-memory-based distributed Algorithm \ref{alg:assdone}.

\paragraph{Experiment Setting.} Our implementations use the Pytorch library \cite{paszke2017automatic} and the multi-processing framework of Python.
We implemented the presented methods on two settings: (\textbf{S1}) a machine with a single Nvidia GeForce GTX 2080 Ti GPU, and (\textbf{S2}) a machine with 10 Nvidia GeForce GTX 1080 Ti GPUs on the same board. These GPUs allow concurrent launch of CUDA kernels by multiple processes using Nvidia's multi-process service (MPS). MPS utilizes Hyper-Q capabilities of the Turing architecture based GPUs GeForce GTX 1080/2080 Ti. It allocates parallel compute resources -- the streaming multiprocessors (SMs) -- to concurrent CPU-GPU connections based on their availability. The system CPUs (detail in Appendix Section \ref{app:sec:experiment}) have enough cores to support non-blocking data transfer between main-memory and GPUs via independent CPU threads. 

\paragraph{Scalable Asynchronous Shared-memory Implementation.} On a single GeForce GTX 2080 Ti, we can allocate up to 11 GB of memory on device, whereas, GeForce GTX 1080 Ti allows allocation of up to 12 GB on device. Typically, the memory footprint of Resnet20 \cite{he2016deep} over CIFAR10 \cite{krizhevsky2009learning} for the standard minibatch-size (BS) of 256 is roughly 1.1 GB. However, as the memory footprint of a training instance grows, for example, DenseNet121 \cite{HuangLMW17} on CIFAR10 requires around 6.2 GB on device for training with a minibatch size of 128,  which can prevent single GPU based concurrent or large-batch training. Notwithstanding, a multi-GPU implementation provides a basis for scalability. 

Parameter server (PS) \cite{LiAPSAJLSS14} is an immediate approach for data-parallel synchronous distributed SGD. 
Alternatively, the  decentralized approaches are becoming increasingly popular for their superior scalability.
Pytorch provides a state-of-the-art implementation of this method in the \texttt{nn.parallel.DistributedDataParallel()} module.
For our purposes, we implement concurrent parameter server as described below:
\begin{itemize}[leftmargin=0.1cm,align=left,labelwidth=\parindent,labelsep=4pt,nosep]
	\item A single GPU hosts multiple concurrent processes working as asynchronous PSs. Each PS can have multiple slaves hosted on independent GPUs.  More specifically, with an availability of $n$ GPUs, we can spawn $c$ PSs each having $s$ slaves as long as $cs+1\le n$. For instance, in our setting \textbf{S2}, we spawn 3 PSs on the first GPU and assign 3 slaves to each, thus populating the array of 10 GPUs.
	\item The PSs and their slaves have access to the entire dataset. For each minibatch they decide on a common permutation of length as the sum of their assigned minibatch sizes and partition the permutation to draw non-overlapping samples from the dataset. They shuffle the permutation on a new epoch (a full pass over the entire dataset). This ensures I.I.D. access to the dataset by each PS and its slaves. Typically, the minibatch size for a PS is smaller than that of its slaves to ensure load-balancing over the shared GPU hosting multiple concurrent PSs. 
\end{itemize}
Notice that our implementation has a reduced synchronization overhead at the master GPU in comparison to a PS based synchronous training. Furthermore, as a byproduct of this implementation, we are able to train with a much smaller "large" minibatch size compared to a synchronous decentralized or PS implementation, which faces issues with generalization performance \cite{GoyalDGNWKTJH17}. In summary, our implementation applies to two experimental settings:
\begin{enumerate}[leftmargin=0.1cm,align=left,labelwidth=\parindent,labelsep=4pt, label=(\alph*),nosep]
\item \textbf{A single GPU} is sufficient for the training: instead of increasing the baseline minibatch size, use multiple concurrent processes with the same minibatch size.
\item A single GPU can not process a larger minibatch and therefore \textbf{multiple GPUs} are necessary for scalability: instead of a single PS synchronizing across all the GPUs, use concurrent PSs synchronizing only subsets thereof.
\end{enumerate}

\paragraph{Block Partitioned Subgradients.} 
Subgradient computation for a NN model via backpropagation is provided by the autograd module of Pytorch. Having generated a computation graph during the forward pass, we can specify the weights and biases of the layers along which we need to generate a subgradient in a call to
 \texttt{torch.autograd.grad()}. We utilize this functionality in order to implement "restricted" backpropagation in \pasm. Notice that, computing the subgradient with respect to the farthest layer from the output incurs the cost of a full backpropagation. We assign the layers to concurrent processes according to roughly equal \textit{division of weights}. With this approach, on average we save $F\frac{(c-1)(c-2)}{c}$ floating point operations (flops) in subgradient computation with $c$ processes in the system, where $F$ is the flops requirement for backpropagation on one minibatch processing iteration. At a high level, our subgradient partitioning may give semblance to model partitioning, however note that, most commonly model partitioning relies on pipeline parallelism \cite{abs-1806-03377} , whereas we compute subgradients as well as perform forward pass independently and concurrently. 

\paragraph{\pasmp Heuristic.} As described above,  \pasm effectively saves on computation, which we observe in practice as well, see Figure \ref{tab:summary}. However, as discussed in the analysis, \pasm converges to a block-wise stationary point as opposed to a potential stationary point that \asm or \sgd would converge to, after processing an identical amount of  samples. Obviously we aim to avoid compromising on the quality of optimization. 
\begin{figure*}[t]
\centering
\adjustbox{valign=b}{\subfigure[Performance Summary.]{%
\scriptsize\vspace{0pt}
\begin{tabular}{p{0mm}p{17mm}|p{3.mm}p{3.5mm}p{3.5mm}p{3.5mm}p{3.5mm}}
	\toprule\scriptsize
	{} & Algo & Train{\newline}Loss & Train{\newline}Acc@1 & Test{\newline}Loss & Test{\newline}Acc@1 & Time{\newline}(Sec) \\
	\midrule
(1) &           \sgd &     0.012 &    99.91 &    1.405 &       68.57 &     4262 \\
(2) &          \asm &     0.010 &    99.95 &    1.390 &       68.12 &     3545 \\
(3) &         \pasm &     1.146 &      69.88 &    1.414 &     61.40 &     1773 \\\rowcolor{LightCyan}
(4) &      \pasmp &     0.030 &    99.63 &    1.224 &       \textbf{68.59} &     \textbf{2668} \\
(5) &           \sgd (BS=1024) &     0.006 &  99.97 &    1.533 &       68.15 &     3179 \\
\bottomrule
\end{tabular}

}}
\hfill
\adjustbox{valign=b}{\subfigure[Train Loss.]{%
\includegraphics[width=0.25\textwidth]{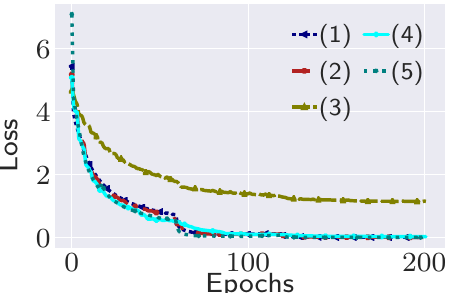}
}}
\hfill
\adjustbox{valign=b}{\subfigure[Top1 Val Accuracy.]{%
	\includegraphics[width=0.25\textwidth]{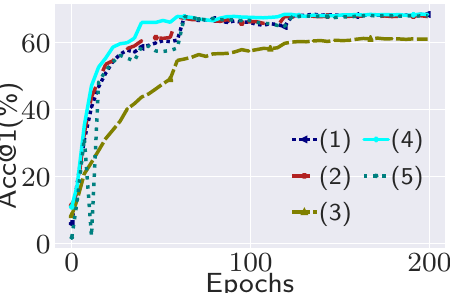}
}}\caption{\small First, we take the case of a relatively small architecture, Shufflenet \cite{ZhangZLS18} with 1012108 parameters, training over CIFAR100  on the setting \textbf{S1}. The memory footprint for baseline SGD with BS=128 is $\sim$2.1 GB. The asynchronous methods use BS=128 and train with 4 processes. The large-batch training gets LR warm-up. The initial LR, weight-decay, momentum are identical across the methods. \pasmp provides 1.2x speed-up compared to the large-batch method and 1.6x compared to the baseline with a superior validation accuracy.}\label{fig:shufflenet-cifar100}
\end{figure*}
\begin{figure*}[t]
\centering
\adjustbox{valign=b}{\subfigure[Performance Summary.]{%
\scriptsize\vspace{0pt}
\begin{tabular}{p{0mm}p{16mm}|p{3.5mm}p{3.5mm}p{3.5mm}p{3.5mm}p{3.5mm}}
	\toprule\scriptsize
	{} & Algo & Train{\newline}Loss & Train{\newline}Acc@1 & Test{\newline}Loss & Test{\newline}Acc@1 & Time{\newline}(Sec) \\
	\midrule
	(1) &          \asm &     0.001 &     99.99 &    0.287 &     93.45 &    17570 \\
	(2) &         \pasm &     0.383 &      92.75 &    0.391 &     87.41 &     9659 \\\rowcolor{LightCyan}
	(3) &       \pasmp &     0.002 &      99.99 &    0.324 &     \textbf{94.07} &    \textbf{13047} \\
	(4) &           \sgd &     0.002 &      99.99 &    0.273 &     94.36 &    16199 \\
	(5) &           \sgd (BS=512) &     0.001 &     99.99 &    0.297 &     93.93 &    15314 \\
	\bottomrule
\end{tabular}

}}
\hfill
\adjustbox{valign=b}{\subfigure[Train Loss.]{%
\includegraphics[width=0.25\textwidth]{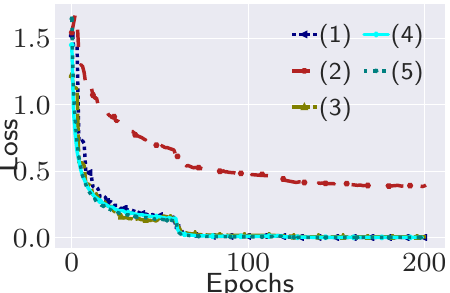}
}}
\hfill
\adjustbox{valign=b}{\subfigure[Top1 Val Accuracy.]{%
	\includegraphics[width=0.25\textwidth]{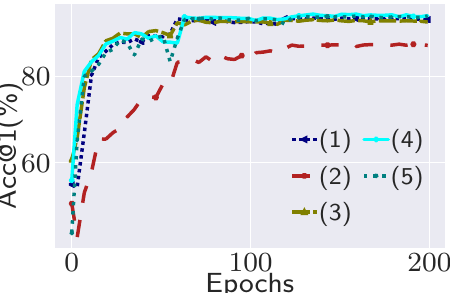}\vspace{-2pt}
}}\caption{\small Now, we consider WideResnet16x8 \cite{ZhangZLS18} with 11012036 parameters training over CIFAR10 on the setting \textbf{S1}. The initial LR, weight-decay, momentum are identical across the methods. The memory footprint for SGD with BS=64 is $\sim$2 GB. Here we compute subgradients with BS=64 and update the model at BS=128 for the baseline SGD and the asynchronous methods, which spawn 4 concurrent processes for training. The large-batch SGD computes subgradients with BS=256, and updates the model with BS=512, and is given LR warm-up. Notice that as the model size grows, we face memory constraints and higher training time, which clearly needs scaling. In this set of experiments, \pasmp achieves 1.25x speed-up in comparison to baseline and 1.17x in comparison to a large-batch method, with a generalization performance at par.}\label{fig:wnet168-cifar10}
\end{figure*}
\begin{figure*}[t]
\centering
\adjustbox{valign=b}{\subfigure[Performance Summary.]{%
\scriptsize\vspace{0pt}
\begin{tabular}{p{11mm}p{6mm}|p{3.mm}p{3.5mm}p{3.5mm}p{3.5mm}p{3.5mm}}
	\toprule\scriptsize
	Algo& Dataset & Train{\newline}Loss & Train{\newline}Acc@1 & Test{\newline}Loss & Test{\newline}Acc@1 & Time{\newline}(Sec) \\
	\midrule
(1)~\sgd & C100&    0.005 &      99.97 &  1.355 &       74.31 &     4478 \\
(2)~\sgd & C10&    0.002 &     99.99 &     0.290 &     93.35 &     3751 \\\rowcolor{LightCyan}
(3)~\pasmp &  C100&   0.004 &     100.00 &    1.402 &     \textbf{74.67} &     \textbf{3827} \\\rowcolor{LightCyan}
(4)~\pasmp & C10&    0.002 &     99.99 &    0.302 &      \textbf{93.33} &     \textbf{3307} \\
\bottomrule
\end{tabular}

}}
\hfill
\adjustbox{valign=b}{\subfigure[Train Loss.]{%
\includegraphics[width=0.25\textwidth]{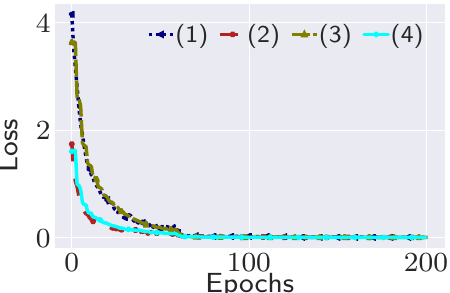}
}}
\hfill
\adjustbox{valign=b}{\subfigure[Top1 Val Accuracy.]{%
	\includegraphics[width=0.25\textwidth]{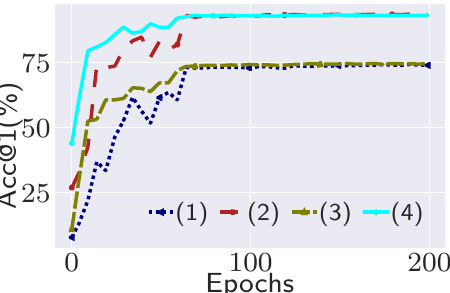}
}}\caption{\small Finally we consider an architecture with even bigger memory requirement, Densenet121 \cite{HuangLMW17} with 7048548 parameters, training over CIFAR10/100 (C10/100). Its memory footprint of $\sim$6.2 GB with BS=128 for \sgd makes it practical to train on the distributed setting \textbf{S2}. Here \sgd is decentralized over 10 GPUs and is implemented by a state-of-the-art (SOTA) framework \texttt{DistributedDataParallel} of Pytorch. \pasmp spawns 3 concurrent PSs. Here \sgd is a large-batch implementation, which computes subgradients at BS=128 and updates the model at BS=1280 on aggregation. On the other hand, the slaves of each PS compute subgradient at BS=128 and the PSs themselves compute subgradient at BS=32 in order for load-balancing at the shared GPU. We do LR warm-up for both the methods. Because the updates by PSs and \sgd have different BSs, we use different LR for them, essentially in the ratio of their BSs: $\frac{416}{1280}$.  The weight-decay and momentum are identical across the methods. Compared to the SOTA implementation, \pasmp provides speed-up of 1.17x for CIFAR100 and 1.13x for CIFAR10 with matching validation accuracy. We can explain the speed-up in terms of (a) reduced flops during backpropagation, (b) reduced communication cost for partitioned subgradients across GPUs, and (c) potentially reduced synchronization cost in a distributed setting.}\label{fig:densenet}
\end{figure*}


During the course of an end-to-end training, a standard diminishing learning rate (LR) scheme is the following: the LR is dampened by a constant factor $\gamma$ when a fixed portion of the sample-processing budget is consumed. A common practice is to dampen LR when 50\% and 75\% samples are processed. Some practitioners also prefer dampening LR when 30\%, 60\% and 80\% samples are processed \cite{GoyalDGNWKTJH17}. 

Now, an immediate clue to improve the optimization by \pasm would be to spend some floating point operations saved by partitioned subgradient computation. We observed that during the initial phase of training, doing full subgradient updates helps. However, on further exploration we noticed that even if we perform full subgradient updates during the first 50\% number of epochs, it was insufficient for \pasm to recover the same level of train and validation accuracy. Furthermore, we noticed that doing the full subgradient updates during a small number of epochs around LR dampening steps would improve the convergence results. 

Based on these observations, we developed the following heuristic. We perform roughly 25\% \asm iterations initially. After that we switch to \pasm iterations. Further down the course of optimization, we throw in roughly 10\% \asm iterations around the LR dampening phase, thus in total performing roughly 50\% \asm and 50\% \pasm iterations. On top of this, we marginally dampen the LR by a factor of $(1-1/c)$, where $c>1$ is the number of concurrent processes, when switching from \asm to \pasm. Our experiments show that this heuristic was good enough to almost always recover the optimization quality that full subgradient computation methods would achieve. 

\paragraph{Experimental Observations, Discussions and Summary.}
We present detailed observations and discussions thereof for a number of well-known CNN architectures for image classification in Figures \ref{fig:shufflenet-cifar100}, \ref{fig:wnet168-cifar10}, and \ref{fig:densenet}. Each of the experimental runs is seeded, however, there is always a system-dependent randomization in asynchronous updates due to multiprocessing. Therefore, we take the average of three runs. 

Hyperparameter tuning for optimization of neural networks is a challenging task for practitioners \cite{teja1910}. Many existing work particularly suggest tuning other hyperparameters such as momentum \cite{SutskeverMDH13,zhang2017yellowfin}. Yet others tune the learning rate adapted to the performance on dev set \cite{DuchiHS11}. In comparison to these methods, our heuristic is simple enough, yet we believe that on further tuning the hyperparameters \pasmp would be an extremely efficient method that would save on computation with a quality solution for neural network training. In the experiments, we followed the well-accepted strategies with respect to the system components, such as number of CPU threads to load data, to run the baseline; it is especially encouraging to see that \pasmp outperforms SOTA on this count.

\section{Conclusion}

In this paper we analyzed the convergence theory of asynchronous parallel stochastic subgradient descent. Under a realistic probabilistic model of asynchronous parallel architecture applied to the stochastic subgradient method with momentum, we showed asymptotic convergence to a stationary point of general nonconvex nonsmooth objectives.

We presented numerical results that indicate that the theory suggests an efficient implementation strategy for this class of optimization methods to train deep neural networks.
%

\bibliographystyle{plain}
\bibliography{refs}
\appendix

\section{The Probabilistic Computation Model}

\subsection{Motivation}

Classical convergence analysis for parallel asynchronous optimization using shared memory has been based on a simplistic model of asynchronous computation. In particular, it is assumed that every block of the parameter vector has an equal probability of being updated at every iteration. Moreover, the delay between the the age of each block at each core and the shared memory is fixed. As a consequence, each core reads and updates each block in a symmetric fashion.

The simplistic model of asynchronous computation in \cite{zhu2018asynchronous,li2018simple} is also not consistent with standard practice in real processing architectures. For example, in the common Non-Uniform Memory Access (NUMA) architecture, experiments have shown that it can be more effective for each core to update only a subset of blocks. As such, the block that is updated at each iteration depends on the previous iterates and the last core to update. This induces a probabilistic dependence between delays and the block to be updated. 

To account for these realistic features of asynchronous computation, we consider the model given in \cite{cannelli2016asynchronous} and apply it to the stochastic approximation scheme. In this work, an algorithm based on successive convex approximation and coordinate-wise updates was also proposed and analysed. 

\subsection{Algorithm from the ``Global" Perspective}

We first reformulate Algorithm 1 from the perspective of a global counter, indicating sequential updates of any variable block by any core. In iteration $k$, the updated iterate $x_{i^k}^{k+1}$ depends on a random vector $\zeta^k \overset{\triangle}{=} (i^k,d^k,\xi^k)$, where $i^k$ is the variable block to be updated, while $d^k$ and $\xi^k$ are the vector of delays for the variables and the set of mini-batches, respectively, for the core performing the update. With this notation, Algorithm 1 in the main text can be rewritten from the global perspective as given in Algorithm~\ref{alg:assd} here.

\begin{algorithm}[H]
	\caption{Asynchronous Stochastic Subgradient Method with a Global Counter}\label{alg:assd} 
	\begin{algorithmic}[1]
		\STATE \textbf{Input:} $x_0$.
		\FOR{$k = 1,2,\ldots$}
		\STATE Sample $\zeta^k=(i^k,d^k,\xi^k)$.
		\STATE Update $u_{i^k} \leftarrow m u_{i^k} + g_{i^k}(x^{k},\xi^k)$
		\STATE Update $x^{k+1}_{i^k} \leftarrow x^k_{i^k}-(1-m)\gamma^{\nu(k)} u_{i^k}$
		\STATE $k \leftarrow k+1$
		\ENDFOR
	\end{algorithmic}
\end{algorithm}


In order to obtain global convergence, we require a diminishing step size. However, as synchronization is not feasible, each core does not necessarily have access to a global counter to inform the step size decrease. Instead, each core has its own local step size $\gamma^{\nu(c^k,k)}$, where $c^k$ is the core in the $k$-th global iteration. We also define the random variable $Z^k$ as the element of $\{1,\ldots,\overline{c}\}$ that is active in global iteration $k$. The random variable $\nu(c^k,k) \overset{\triangle}{=} \sum_{j=0}^k I(Z^j = c^k)$ denotes the number of updates performed by core $c^k$ before the $k$-th iteration. 

In practice, it has been observed that it is more efficient to partition variable blocks over the cores, rather than allowing every processor to update any variable block \cite{liu2015asynchronous}. Using this approach, $c^k$ is uniquely defined by $i^k$, the block variable index updated in iteration $k$.

\subsection{Continuous-Time Reformulation of the Probabilistic Model}\label{app:subsec:reformulation}

Recall the update mechanism for the Algorithm
\begin{align}\label{eq:eachitersa}
x_{i^{k+1}}^{k+1} = x_{i^k}^{k+1} + (1-m)\gamma^{k,i} \sum_{j=1}^k m^{k-j} Y_{j,i},
\end{align}
where $Y_{j,i}$ is based on an estimate of the partial subgradient with respect to block variables indexed by $i$ at local iteration $j$. 

In the context of Algorithm 1, the step size is defined to be the subsequence
$\{\gamma^{k,i}\}=\{\gamma^{\nu(c(i),l)}:i=i^l\}$ where $l$ is the iteration index
for the core corresponding to block $i$. That is, the step size sequence forms the subsequence of $\gamma^k$ for which $i^k=i$ is the block of variables being modified. 

The term $Y_{k,i}$ corresponds to $g(x_{k},\xi)$ and satisfies, 
\[
Y_{k,i} = g_{i} ((x_{k-[d^k_{i}]_1,1},...,x_{k-[d^k_i]_j,j},...,x_{k-[d^k_i]_n,n}))+\beta_{k,i}+\delta M_{k,i},
\]
where $g_{i}(x)$ denotes a selection of an element of the subgradient, with respect to block $i$, of $f(x)$. The quantity $\delta M_{k,i}$ is a martingale difference sequence,
satisfying $\delta M_{k,i} = M_{k+1,i}-M_{k,i}$ for a martingale $M_k$, a sequence of random variables which satisfies $\mathbb{E}[M_{k,i}]<\infty$ and $\mathbb{E}[M_{k+1,i}|M_{j,i},j\le k] =M_{k,i}$ with probability $1$ for all $k$. It then holds that $\mathbb{E}[|M_{k,i}|^2]<\infty$ and $\mathbb{E}[M_{k+1,i}-M_{k,i}][M_{j+1,i}-M_{j,i}]'=0$. As a consequence, $\mathbb{E}_{k,i}[\delta M_{k,i}]=0$. 

The assumption that $\delta M_{k,i}$ forms a martingale difference sequence is a common condition, which was introduced by the original Robbins-Monro method~\cite{robbins1985stochastic}. In particular, the martingale difference sequence assumption holds when the stochastic gradient estimate is obtained from a subset  $\xi\subseteq \{1,...,M\}$ of mini-batches sampled uniformly from the set of size $|\xi|$ of subsets of $\{1,...,M\}$, independently at each iteration. This results in independent noise at each iteration being applied to the stochastic subgradient term. From these mini-batches $\xi$, a subgradient is taken for each $j\in\xi$ and averaged; i.e., 
\begin{equation}\label{eq:subdifest}
g(x,\xi) = \frac{1}{|\xi|}
\sum_{j\in \xi} g^j(x)
\end{equation} 
where $g^j(x)\in \partial f_j(x)$.

\section{Relationship to the Probabilistic Model of Asynchronous Computation}

We now relate the assumptions we have presented so far to this model for inconsistent read given in~\cite{cannelli2016asynchronous}.




To this end, define $\underline{\zeta}^{0:t}\triangleq (\underline{\zeta}^0,\underline{\zeta}^1,...,\underline{\zeta}^t)$ to be a sequence of the blocks and minibatches used, as well as the iterate delays. To measure this process, define the sample space $\Omega$ of all sequences $\{\underline{\zeta}^k\}_k$. To define a $\sigma$-algebra on $\Omega$, for every $k \geq 0$ and every $\underline{\zeta}^{0:k}$, consider the cylinder
\begin{align}
    C^k(\underline{\zeta}^{0:k}) \overset{\triangle}{=} \{\omega \in \Omega: \omega_{0:k} = \underline{\zeta}^{0:k}\},
\end{align}
where $\omega_{0:k}$ is the first $k+1$ elements of $\omega$. Denote $\mathcal{C}^k$ as the set of $C^k(\underline{\zeta}^{0:k})$ for all possible values of $\underline{\zeta}^{0:k}$. Let $\sigma(\mathcal{C}^k)$ be the cylinder $\sigma$-algebra generated by $\mathcal{C}^k$, and define for all $k$,
\begin{align}
    \mathcal{F}_k = \sigma(\mathcal{C}^k),~~~\mathcal{F} \overset{\triangle}{=} \sigma(\cup_{t=0}^{\infty} \mathcal{C}^t).
\end{align}
With a probability measure $\mathbb{P}(C^k(\underline{\zeta}^{0:k}))$ satisfying standard assumptions (detailed in \cite{cannelli2016asynchronous}), induces a probability measure $P$ such that $(\Omega,\mathcal{F},P)$ forms a probability space. This provides a means of formalizing the discrete-time stochastic process $\zeta$ where $\underline{\zeta}$ is a sample path of the process.

The conditional probabilities $\mathbb{P}(\zeta^{k+1} = \omega^{k+1}|\zeta^{0:k} = \omega^{0:k})$ are then defined by
\begin{align}
    p(\zeta^{k+1}|\zeta^{0:k}) = \mathbb{P}(\zeta^{k+1} = \omega^{k+1}|\zeta^{0:k} = \omega^{0:k}) = \frac{\mathbb{P}(C^{k+1}(\zeta^{0:k+1}))}{\mathbb{P}(C^k(\zeta^{0:k}))}
\end{align}

In \cite{cannelli2016asynchronous}, the following assumptions were introduced for the probabilities of
block selection and the delays.

\begin{assumption}\label{as:probsdels}
The process $\{\zeta^k\}_k$ satisfies,
\begin{enumerate}
\item There exists a $\delta$ such that $d^k_j\le \delta$ for all $j$ and $k$. 
Thus each $d^k_j\in \mathcal{D}\triangleq \{0,...,\delta\}^n$.
\item For all $i$ and $\zeta^{0:k-1}$ such that $p_{\zeta^{0:k-1}}(\zeta^{0:k-1})>0$, it holds that,
\[
\sum_{d\in\mathcal{D}} p((i,d,\xi)|\zeta^{0:k-1})\ge p_{min} 
\]
for some $p_{min}>0$.
\item It holds that,
\[
\mathbb{P}\left(\left\{\zeta\in \Omega:\liminf_{k\to \infty} p(\zeta|\zeta^{0:k-1})>0\right\}\right) = 1
\]
\end{enumerate}
\end{assumption}

The first condition is an irreducibility condition that there is a positive probability for any block or minibatch to be chosen, given any state of previous realizations of $\{\zeta^k\}$. The second assumption indicates that the set of events in $\Omega$ that asymptotically go to zero in conditional probability are of measure zero.



We note that $\liminf_{k\to\infty}\frac{\gamma^{\nu(c^k,k)}}{k} = 0$ in probability is implied by 
\[
\sum_{i\in c^k,d\in\mathcal{D},\xi\subseteq \{1,...,M\}} Pr ((i,d,\xi)|\zeta^{0:k-1})) \to 0
\]
for some subsequence, which is antithetical to Assumption~\ref{as:probsdels}(i). Thus, the step-sizes $\gamma^{\nu(c^k,k)}$ satisfy
\begin{equation}\label{eq:liminfstep}
\liminf_{k\to\infty}\frac{\gamma^{\nu(c^k,k)}}{k} > 0,
\end{equation}
where the limit of the sequence is taken in probability. This is also an assumption for the analysis of stochastic asynchronous parallel algorithms in the simplified model developed in \cite{borkar2008stochastic}.


\section{Proofs of the Theoretical Results}\label{app:sec:proof}

\subsection{Preliminaries}

\begin{lemma} 
It holds that $\{Y_{k,i},Y^\sigma_{k,i};k,i\}$ is uniformly integrable. Thus, so is 
\[
\left\{\sum_{j=1}^k m^{k-j} Y_{j,i},\sum_{j=1}^k m^{k-j} Y^\sigma_{j,i};k,i\right\}
\]
\end{lemma}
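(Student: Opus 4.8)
The plan is to reduce everything to the almost-sure boundedness of the stochastic subgradient estimates supplied by Assumption~\ref{as:subg}(iii), and then to invoke the elementary fact that a uniformly (a.s.) bounded family is automatically uniformly integrable. First I would recall from the reformulation that $Y_{k,i}$ is precisely the block-$i$ component of the stochastic subgradient estimate $g(x_k;\xi^k)$. Since a block component is a subvector of the full estimate, $\|Y_{k,i}\| \le \|g(x_k;\xi^k)\| \le B_g$ with probability one, and this bound depends on neither $k$ nor $i$. The time-shifted variables $Y^\sigma_{k,i} = Y_{p_i(\sigma)+k,i}$ are merely relabelings of members of the same family, so they inherit the identical bound $B_g$. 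Hence $\{Y_{k,i},Y^\sigma_{k,i};k,i\}$ is uniformly bounded almost surely.

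Next I would spell out the implication to uniform integrability. For any threshold $K > B_g$ the indicator $\mathbf{1}_{\{|Y_{k,i}|>K\}}$ vanishes almost surely, so $\sup_{k,i}\mathbb{E}\left[|Y_{k,i}|\,\mathbf{1}_{\{|Y_{k,i}|>K\}}\right] = 0$, and the defining limit characterizing uniform integrability holds trivially. This settles the first assertion.

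For the second assertion I would exploit the geometric decay of the momentum weights. Since $0 < m < 1$, the triangle inequality together with the uniform bound gives
\[
\left\|\sum_{j=1}^k m^{k-j} Y_{j,i}\right\| \le \sum_{j=1}^k m^{k-j}\|Y_{j,i}\| \le B_g \sum_{l=0}^{\infty} m^{l} = \frac{B_g}{1-m},
\]
a bound uniform in $k$ and $i$ (and identically for the shifted sums $\sum_{j=1}^k m^{k-j} Y^\sigma_{j,i}$). The partial sums therefore again form a uniformly (a.s.) bounded family, and the same implication used above yields their uniform integrability.

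There is no genuine obstacle here: the result is essentially a bookkeeping consequence of Assumption~\ref{as:subg}(iii) combined with the summability of the geometric series $\sum_l m^l$. The only points warranting a modicum of care are confirming that the bound on the full gradient norm transfers to each block component, and that the geometric tail $\sum_{l=0}^{\infty} m^l = 1/(1-m)$ furnishes a bound that is uniform in the number of terms $k$; both are immediate. One could alternatively establish uniform integrability through an $L^2$ bound drawing on Assumption~\ref{as:subg}(ii), but the direct boundedness route is the cleanest and is what I would adopt.
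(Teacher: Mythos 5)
Your proof is correct and follows essentially the same route as the paper: uniform integrability of $\{Y_{k,i},Y^\sigma_{k,i}\}$ from the almost-sure bound $B_g$ in Assumption~\ref{as:subg}(iii), and uniform integrability of the momentum sums from $0<m<1$ via the geometric series. The only difference is cosmetic — you bound the partial sums explicitly by $B_g/(1-m)$, whereas the paper cites the general fact that a geometric sum of a uniformly integrable family is uniformly integrable.
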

\begin{proof}
Uniform integrability of $\{Y_{k,i},Y^\sigma_{k,i};k,i\}$ follows from Assumption 3.2, part 3. 
The uniform integrability of $\left\{\sum_{j=1}^k m^{k-j} Y_{j,i},\sum_{j=1}^k m^{k-j} Y^\sigma_{j,i};k,i\right\}$
follows from $0<m<1$ and the fact that a geometric sum of a uniformly integrable sequence is uniformly
integrable.
\end{proof}

\begin{lemma}\label{lem:gammadbound}
It holds that, for any $K>0$, and all $l$,
\[
\sup_{k<K} \sum_{j=k-[d^k_i]_l}^k \gamma^\sigma_{j,i} \to 0
\]
in probability as $\sigma\to\infty$.
\end{lemma}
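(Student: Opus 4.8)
The plan is to collapse the windowed sum into a finite maximum of individual shifted step sizes and then show that each of these tends to zero in probability. First I would invoke Assumption~\ref{as:probsdels} (bounded delays): since $[d^k_i]_l\le\delta$, the inner sum $\sum_{j=k-[d^k_i]_l}^{k}\gamma^\sigma_{j,i}$ has at most $\delta+1$ nonnegative terms. As $k$ ranges only over the \emph{finite} set $\{k:k<K\}$, every index $j$ that can occur lies in the fixed finite range $\{-\delta,\ldots,K-1\}$, independent of $\sigma$. Hence
\[
\sup_{k<K}\sum_{j=k-[d^k_i]_l}^{k}\gamma^\sigma_{j,i}\;\le\;(\delta+1)\max_{-\delta\le j\le K-1}\gamma^\sigma_{j,i},
\]
so it suffices to prove $\gamma^\sigma_{j,i}\to 0$ in probability for each fixed $j$, since a finite maximum of such terms inherits the convergence.

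Next I would exploit the continuous-time representation of the step size. Recall that $\gamma^\sigma_{j,i}=\gamma_{p_i(\sigma)+j,i}$ is the average of $\gamma(\cdot)$ over the interval $[T_{p_i(\sigma)+j,i},\,T_{p_i(\sigma)+j,i}+\delta\tau_{p_i(\sigma)+j,i}]$. Because \eqref{eq:stepsizeas} gives $0<\gamma(s)\to 0$, we have the deterministic bound $\gamma^\sigma_{j,i}\le\sup_{s\ge T_{p_i(\sigma)+j,i}}\gamma(s)$, which vanishes as soon as the start time diverges. The claim therefore reduces to showing $T_{p_i(\sigma)+j,i}\to\infty$ in probability as $\sigma\to\infty$.

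Finally I would establish divergence of these start times. By the definition $p_i(\sigma)=\min\{j:T_{j,i}\ge\sigma\}$ we have $T_{p_i(\sigma),i}\ge\sigma$, so for $j\ge 0$ the bound $T_{p_i(\sigma)+j,i}\ge\sigma$ holds outright. For the backward (delay) indices $-\delta\le j<0$ I would write
\[
T_{p_i(\sigma)+j,i}=T_{p_i(\sigma),i}-\sum_{l=p_i(\sigma)+j}^{p_i(\sigma)-1}\delta\tau_{l,i}\;\ge\;\sigma-\sum_{l=p_i(\sigma)-\delta}^{p_i(\sigma)-1}\delta\tau_{l,i},
\]
and control the subtracted sum of at most $\delta$ shifted inter-update times. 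By Assumption~\ref{as:uniformintdt} the family $\{\delta\tau^\sigma_{k,i}\}$ is uniformly integrable, hence tight, so this finite sum is bounded in probability; consequently $T_{p_i(\sigma)+j,i}\ge\sigma-O_P(1)\to\infty$ in probability. Combining with the second step yields $\gamma^\sigma_{j,i}\to 0$ in probability, and the finite-maximum bound from the first step completes the argument.

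I expect the main obstacle to be precisely this last step: handling the indices that reach backward by up to $\delta$ from $p_i(\sigma)$. For the forward indices the start time exceeds $\sigma$ deterministically, but for the delayed indices one must rule out that peeling off $\delta$ inter-update times drags the real-time clock back into a bounded region. This is exactly where uniform integrability of the inter-update times is indispensable, and it is also the reason the conclusion is stated \emph{in probability} rather than surely.
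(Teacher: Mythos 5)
Your proposal is correct and follows essentially the same route as the paper's proof, which simply bounds the delayed window by at most $\delta$ terms via Assumption~\ref{as:probsdels} and writes $\sum_{j=k-[d^k_i]_l}^k \gamma^\sigma_{j,i}\le \delta\gamma^\sigma_{k-\delta,i}\to 0$ using the monotone decay of the step sizes. Your version is in fact more careful than the paper's one-liner: replacing monotonicity with a finite maximum over the range $\{-\delta,\ldots,K-1\}$, and justifying via uniform integrability (Assumption~\ref{as:uniformintdt}) that the start times $T_{p_i(\sigma)+j,i}$ still diverge for the backward-shifted indices, is a legitimate filling-in of details the paper leaves implicit.
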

\begin{proof}
As $\sigma\to\infty$, by the definition of $\gamma^\sigma_{k,i}$, $ \gamma^\sigma_{k,i}\to 0$ and since by Assumption 3.1 $\max d^{k}_i\le \delta$, 
for all $k<K$, $\sum_{j=k-[d^k_i]_l}^k \gamma^\sigma_{j,i}\le \delta\gamma^\sigma_{k-\delta,i}\to 0$.
\end{proof}

Our main convergence result establishes weak convergence of trajectories to an invariant set. Weak convergence is defined in terms of the Skorohod topology, a technical topology weaker than the topology of uniform convergence on bounded intervals, defined in~\cite{dudley1978central}. Convergence of a function $f_n(\cdot)$ to $f(\cdot)$ in the Skorohod topology is equivalent to uniform convergence on each bounded time interval. We denote by $D^j(-\infty,\infty)$ the $j$-fold product space of real-valued functions on the interval $(-\infty,\infty)$ that are right continuous with left-hand limits, with the Skorohod topology forming a complete and separable metric space.

We present a result indicating sufficient conditions for tightness of a set of paths in $D^j(-\infty,\infty)$.
\begin{theorem}\cite[Theorem 7.3.3]{kushner2003stochastic}
Consider a sequence of processes $\{A_k(\cdot)\}$ with paths in $D^j(-\infty,\infty)$ such that
for all $\delta>0$ and each $t$ in a dense set of $(-\infty,\infty)^j$ there is a compact set
$K_{\delta,t}$ such that, 
\[
\inf_n\mathbb{P}\left[A_n(t)|\in K_{\delta,t}\right]\ge 1-\delta,
\]
and for any $T>0$,
\[
\lim_{\delta\to 0} \limsup_n \sup_{|\tau|\le T} \sup_{s\le \delta} \mathbb{E}\left[
\min\left[|A_n(\tau+s)-A_n(\tau)|,1\right]\right]=0
\]
then $\{A_n(\cdot)\}$ is tight in $D^j(-\infty,\infty)$.
\end{theorem}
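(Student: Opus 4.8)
The plan is to deduce tightness from Prohorov's theorem, which reduces the claim to producing, for every $\epsilon>0$, a relatively compact set $\mathcal{K}_\epsilon\subset D^j(-\infty,\infty)$ such that $\inf_n\mathbb{P}[A_n(\cdot)\in\mathcal{K}_\epsilon]\ge 1-\epsilon$. To recognize such sets I would invoke the Skorohod analogue of the Arzelà--Ascoli theorem: a family of paths is relatively compact in the Skorohod topology if and only if it is uniformly bounded on each compact time interval and its Skorohod modulus of continuity $w'_x(\delta)=\inf_{\{t_i\}}\max_i\sup_{s,t\in[t_i,t_{i+1})}|x(s)-x(t)|$, the infimum over partitions with mesh at least $\delta$, tends to $0$ uniformly as $\delta\to 0$. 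The two hypotheses of the theorem are tailored to supply exactly these two ingredients.

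First I would use the compact-containment hypothesis to control the location of the paths. Fixing a countable dense set of times $\{t_m\}$ and applying the first hypothesis with $\delta=\epsilon 2^{-(m+1)}$ yields compacts $K_m=K_{\epsilon 2^{-(m+1)},t_m}$ with $\inf_n\mathbb{P}[A_n(t_m)\in K_m]\ge 1-\epsilon 2^{-(m+1)}$. The event $B_\epsilon=\bigcap_m\{A_n(t_m)\in K_m\}$ then satisfies $\inf_n\mathbb{P}[A_n(\cdot)\in B_\epsilon]\ge 1-\epsilon/2$ and pins the trajectories into a bounded region at a dense set of instants.

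Next I would convert the expected-increment hypothesis into a probabilistic modulus bound. For fixed $T$ and a target level $\eta>0$, the second hypothesis lets me pick $\delta>0$ so that $\sup_{|\tau|\le T}\sup_{s\le\delta}\mathbb{E}[\min(|A_n(\tau+s)-A_n(\tau)|,1)]$ is arbitrarily small for all large $n$; a Markov inequality on the truncated forward increments, applied over a finite $\delta$-net of base points $\tau$, then bounds the probability that the oscillation of $A_n$ over any window of length $\delta$ exceeds $\eta$, uniformly in $n$. Iterating this over a sequence $\eta\to 0$ and intersecting the resulting events with $B_\epsilon$ produces the candidate $\mathcal{K}_\epsilon$; by the compactness characterization above its closure is compact, and $\inf_n\mathbb{P}[A_n(\cdot)\in\mathcal{K}_\epsilon]\ge 1-\epsilon$, which is what Prohorov requires.

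The main obstacle is the passage in the previous step from the controlled \emph{forward} increments to the \emph{Skorohod} modulus $w'$: because paths in $D^j$ may jump, the ordinary modulus of continuity need not be small, and one must instead exhibit a partition whose mesh exceeds $\delta$ and which isolates the finitely many large jumps, so that only the genuinely small between-jump oscillations—precisely the quantities the second hypothesis bounds—contribute to $w'$. Making this isolation uniform in $n$, and reconciling the $\limsup_n$ appearing in the hypothesis with the $\inf_n$ demanded by tightness, is where the care lies; the truncation by $\min(\cdot,1)$ is what keeps the increments integrable and lets Markov's inequality go through without any further moment assumptions.
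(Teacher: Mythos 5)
The paper does not actually prove this statement --- it is imported verbatim (with a transcription slip, see below) from Kushner and Yin --- so the comparison must be against the correct form and standard proof of the cited result, and there your plan has a genuine gap. The crux is the step ``a Markov inequality on the truncated forward increments, applied over a finite $\delta$-net of base points $\tau$, then bounds the probability that the oscillation of $A_n$ over any window of length $\delta$ exceeds $\eta$.'' The hypothesis controls $\sup_{s\le\delta}\mathbb{E}\left[\min(|A_n(\tau+s)-A_n(\tau)|,1)\right]$ --- the supremum over $s$ sits \emph{outside} the expectation --- and for deterministic $\tau$ this is strictly too weak to control path oscillation or the Skorohod modulus $w'$. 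Concretely, take $A_n(t)=\mathbf{1}_{\{t\ge U\}}+\mathbf{1}_{\{t\ge U+1/n\}}$ with $U$ uniform on $[0,1]$: for every deterministic $\tau$ and $s\le\delta$ one has $\mathbb{E}\left[\min(|A_n(\tau+s)-A_n(\tau)|,1)\right]\le 2\delta$, so both hypotheses as you read them hold, yet $w'_{A_n}(\delta)=1$ whenever $\delta>1/n$ (no partition of mesh greater than $\delta$ can separate the two unit jumps), so $\{A_n\}$ is not tight in $D$. Hence no argument based on deterministic base points and Markov's inequality can close the step you yourself flag as ``where the care lies''; it is not a technicality but the point where the deterministic-time statement is actually false.

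What rescues the theorem is that in Kushner--Yin's Theorem 7.3.3 (Aldous's criterion) the variable $\tau$ ranges over \emph{stopping times} bounded by $T$; the paper's transcription drops this quantifier. The standard proof uses the stopping times essentially: given a grid point $t$, one takes $\tau$ to be the first time after $t$ at which the process has moved by more than $\eta$, so that a non-negligible probability of large oscillation within a $\delta$-window yields a stopping time $\tau$ and an $s\le\delta$ for which $\mathbb{E}\left[\min(|A_n(\tau+s)-A_n(\tau)|,1)\right]$ is bounded below, contradicting the hypothesis (this is exactly how the merging-jumps example is excluded: taking $\tau=U$ detects the second jump). Your outer frame --- Prohorov's theorem plus the Skorohod analogue of Arzel\`a--Ascoli, with the compact-containment hypothesis handling boundedness --- is the right one, but without invoking the stopping-time form of the increment bound (or the equivalent conditional-expectation form in Ethier--Kurtz, Chapter 3, Theorem 8.6, which this paper also uses elsewhere), the key estimate on $w'$ cannot be obtained.
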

If a sequence is tight then every weak sense limit process is also a continuous time
process. We say that $A_k(t)$ \emph{converges weakly} to $A$ if,
\[
\mathbb{E}\left[F(A_k(t))\right]\to \mathbb{E}\left[F(A(t))\right]
\]
for any bounded and continuous real-valued function $F(\cdot)$ on $\mathbb{R}^j$. 

Finally, an invariant set for a differential inclusion (DI) is defined as follows.
\begin{definition}
A set $\Lambda\subset \mathbb{R}^n$ is an invariant set for a DI $\dot{x}\in g(x)$ if for all
$x_0\in \Lambda$, there is a solution $x(t)$, $-\infty<t<\infty$ that lies entirely in $\Lambda$
and satisfies $x(0)=x_0$.
\end{definition}

\subsection{Proof of Theorem 3.1}

\begin{remark}
The proof of Theorem 3.1 largely follows an analogous result in Chapter 12 
of~\cite{kushner2003stochastic}, which
considers a particular model of asynchronous stochastic approximation. As we focus on a different model for asynchronous updates, some of the details of the proof are now different. We have also modified the proof to account for momentum. 
\end{remark}

By~\cite[Theorem 8.6, Chapter 3]{ethier2009markov}, a sufficient condition for tightness of a sequence $\{A_n(\cdot)\}$ is that for each $\delta>0$ and each $t$ in a dense set in $(-\infty,\infty)$, there is a compact set $K_{\delta,t}$ such that $\inf_n \mathbb{P}[A_n(t)\in K_{\delta,t}]\ge 1-\delta$ and for each positive $T$, 
\[
\lim_{\delta\to 0}\limsup_n \sup_{|\tau|\le T, \,s\le \delta}
\mathbb{E}\left[|A_n(\tau+s)-A_n(\tau)|\right]=0.
\]

Now since $Y_{k,i}$ is uniformly bounded, and $Y^\sigma_{k,i}(\cdot)$ is its interpolation with jumps only at $t$ being equal
to some $T_{k,i}$, it holds that for all $i$,
\[
\lim_{\delta\to 0}\limsup_{\sigma}\mathbb{P}\left[ \sup_{t\le T, \,s\le \delta}
|Y^\sigma_{k,i}(t+s)-Y^\sigma_{k,i}(t)|\ge \eta\right]=0
\]
and so by the definition of the algorithm,
\[
\lim_{\delta\to 0}\limsup_{\sigma}\mathbb{P}\left[ \sup_{t\le T, \,s\le \delta}
|x^\sigma_{k,i}(t+s)-x^\sigma_{k,i}(t)|\ge \eta\right]=0
\]
which implies,
\[
\lim_{\delta\to 0}\limsup_{\sigma}\mathbb{E}\left[ \sup_{t\le T, \,s\le \delta}
|x^\sigma_{k,i}(t+s)-x^\sigma_{k,i}(t)|\right]=0
\]
and the same argument implies tightness for $\{\tau^\sigma_i(\cdot),N^\sigma_{i}(\cdot)\}$ by the uniform boundedness of $\{\delta \tau^\sigma_{i,k}\}$ and 
bounded, decreasing $\gamma^\sigma_{k,i}$ and positive $u^\sigma_{k,i}(x,\psi^\sigma_{k+1,i})$, along with Assumption 2.4.
Lipschitz continuity follows from the properties of the interpolation functions. Specifically, the Lipschitz constant
of $x^\sigma_{i}(\cdot)$ is $B_g$.

All of these together imply tightness of $\hat x^\sigma_i(\cdot)$ as well. Thus,
\[
\{x^\sigma_i(\cdot),\tau^\sigma_i(\cdot),\hat x^\sigma_i(\cdot),N^\sigma_i(\cdot);\sigma\}
\]
is tight in $D^{4n}[0,\infty)$. 
This implies the Lipschitz continuity of the subsequence limits with probability one, which exist in the
weak sense by Prohorov's Theorem, Theorems 6.1 and 6.2~\cite{billingsley2013convergence}. 

As $\sigma\to\infty$ we denote the limit of the weakly convergent subsequence by,
\[
(x_i(\cdot),\tau_i(\cdot),\hat x_i(\cdot),N_i(\cdot))
\]
Note that,
\[
\begin{array}{l}
x_i(t)=\hat x_i(\tau_i(t)),\\
\hat x_i(t)=x_i(N_i(t)),\\
N_i(\tau_i(t))=t.
\end{array}
\]

For more details, see the proof of ~\cite[Theorem 8.2.1]{kushner2003stochastic}.

Let,
\[
\begin{array}{l}
M^\sigma_i(t) = \sum_{k=0}^{k=p(\sigma)} (1-m)\delta \tau_{k,i}  \left(\sum_{j=0}^k m^j \delta M^\sigma_{k-j,i}\right) \\
\tilde G_i^\sigma(t) = \sum_{k=0}^{k=p(\sigma)}  \delta \tau_{k,i}\left[(1-m)\sum_{j=0}^k m^{j} g_{i} ((x^\sigma_{{k-j}-[d^{k-j}_i]_1,1}(t),...,\right.\\
\qquad\qquad \left. x^\sigma_{{k-j}-[d^{k-j}_i]_j,j}(t),...,x_{{k-j}-[d^{k-j}_i]_N,N}))(t) - 
 g_{i} (\hat x^\sigma_i(t))\right]\\
\bar G_i^\sigma(t) = \sum_{k=0}^{k=p(\sigma)}  \delta \tau_{k,i} g_{i} (\hat x^\sigma(t)) \\
	B_i^{\sigma}(t) = \sum_{k=0}^{\rho(\sigma)} (1 - m)\delta\tau_{k,i} \left(\sum_{j=0}^k m^j\beta_{k-j,i}^{\sigma}\right)\\
W^\sigma_i(t) = \hat x^\sigma_{i}(\tau^\sigma_i(t))-x^\sigma_{i,0}-\bar G_i^\sigma(t) = \tilde G_i^\sigma(t) +M^\sigma_i(t)
\end{array}
\]

Now for any bounded continuous and real-valued function $h(\cdot)$, an arbitrary integer $p$, and $t$ and $\tau$, and $s_j\ge t$ real, we have
\[
\begin{array}{l}
\mathbb{E}\left[h(\tau^\sigma_i(s_j),\hat x^\sigma(\tau^\sigma_i(s_j))\left(W^\sigma_i(t+\tau)-W^\sigma_i(t)\right)\right] \\
\qquad - \mathbb{E}\left[h(\tau^\sigma_i(s_j),\hat x^\sigma(\tau^\sigma_i(s_j))\left(\tilde G^\sigma_i(t+\tau)-\tilde G^\sigma_i(t)\right)\right] \\
\qquad 
- \mathbb{E}\left[h(\tau^\sigma_i(s_j),\hat x^\sigma(\tau^\sigma_i(s_j))\left(M^\sigma_i(t+\tau)-M^\sigma_i(t)\right)\right] \\
\qquad 	-\mathbb{E}[h(\tau_i^{\sigma}(s_j),\hat{x}^{\sigma}(\tau_i^{\sigma}(s_j)))(B_i^{\sigma}(t + \tau) - B_i^{\sigma}(t))] = 0.
\end{array}
\]
Now the term involving $M^\sigma$ equals zero from the Martingale property. The term involving
$B^\sigma$ is zero due to Assumption 2.5.

We now claim that the term involving $\tilde G^\sigma_i$ goes to zero as well. Since $x^\sigma_{k,i}\to x^\sigma_i$ it holds that,
by Lemma~\ref{lem:gammadbound}, $(x^\sigma_{k-[d^k_i]_1,1}(t),...,x^\sigma_{k-[d^k_i]_j,j}(t),...,x^\sigma_{k-[d^k_i]_N,N})\to \hat x^\sigma(t)$
as well. By the upper semicontinuity of the subgradient, it holds that there exists a $g_{i} (\hat x^\sigma_i(t))\in\partial_i f(\hat x^\sigma_i(t))$
such that 
\[
g_{i} ((x^\sigma_{k-[d^k_i]_1,1}(t),...,x^\sigma_{k-[d^k_i]_j,j}(t),...,x_{k-[d^k_i]_N,N}))(t)\to g_{i} (\hat x^\sigma_k(t))
\]
as $\sigma\to\infty$. Thus each term in the sum converges to $g_{i} (\hat x^\sigma_{k-j}(t))$. Now, given
$j$, as $k\to \infty$, the boundedness assumptions and stepsize rules imply that $g_{i} (\hat x^\sigma_{k-j}(t))\to g_{i} (\hat x^\sigma_{k}(t))$.
On the other hand as $k\to \infty$ and $j\to \infty$, $m^{j} g_{i} (\hat x^\sigma_{k-j}(t))\to 0$. Thus 
\[
\sum_{j=0}^k m^j g_{i} (\hat x^\sigma_{k-j}(t))
\to \frac{1-m^k}{1-m} g_{i} (\hat x^\sigma_{k}(t)) \to \frac{1}{1-m} g_{i} (\hat x^\sigma_{k}(t))
\]
and the claim has been shown.

So the weak sense limit of $\lim_{\sigma\to \infty} W^\sigma_i(\cdot)=W_i(\cdot)$ satisfies 
\[
\mathbb{E}\left[h(\tau_i(s_j),\hat x(\tau_i(s_j))\left(W_i(t+\tau)-W_i(t)\right)\right]
\]
and by ~\cite[Theorem 7.4.1]{kushner2003stochastic} is a martingale and is furthermore a constant with probability one by the Lipschitz continuity of $x$ by ~\cite[Theorem 4.1.1]{kushner2003stochastic}. Thus,
\[
W(t) = \hat x(t)-\hat x(0)-\int_0^t g(\hat x(s))ds = 0,
\]
where $g(\hat x(s))\in\partial f (\hat x(s))$, and the conclusion holds. 

\subsection{Convergence with Probability One}\label{app:subsec:convproof}


\begin{theorem}
There exists a functional $\overline{S}(x,T,\phi)$ defined for $x \in \mathbb{R}^n,T > 0,\phi(\cdot) \in C[0,T]$ satisfying:
\begin{enumerate}
    \item[(i)] There exists $\overline{b}(x)$ such that $\overline{S}(x,T,\phi) = 0$ if and only if $\dot{\phi} \in \overline{b}(\phi)$ almost surely, $\phi(0) = x$.
    \item[(ii)] $\overline{S}(x,T,\phi) \geq 0$.
    \item[(iii)] Given compact $F \subset \mathbb{R}^n$, $T > 0$, and $s \in [0,\infty)$, the set $\{\phi:\phi(0) \in F,\overline{S}(\phi(0),T,\phi) \leq s\}$ is compact.
    \item[(iv)] Given compact $F \subset \mathbb{R}^n$, $T > 0$, $h > 0$ and $s \in [0,\infty)$,
    \begin{align}
        \limsup_{\sigma \rightarrow \infty} \gamma_{p_i(\sigma),i} \log P(\hat{x}_i^{\sigma}(\cdot)) \not\in N_h(\Phi(\hat{x}_i^{\sigma}(0),T,s))|\mathcal{F}_{p_i(\sigma),i}) \leq -s
    \end{align}
    uniformly\footnote{For a sequence $\{f_n\}$ and some $f$, $\limsup_{n \rightarrow \infty} f_n \leq f$ in uniformly in a parameter $\alpha$ if for each $\epsilon > 0$, there is $n_{\epsilon} < \infty$ such that $\sup_{n \geq n_{\epsilon}} f_n \leq f+ \epsilon$.} in $\hat{x}_i^{\sigma}(0) \in F$
\end{enumerate}
\end{theorem}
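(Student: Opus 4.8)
The plan is to construct $\overline{S}$ as the large-deviations action (rate) functional for the interpolated iterates, following the Freidlin--Wentzell program as adapted to stochastic approximation in \cite[Chapter 6]{kushner2003stochastic} and \cite{dupuis1989stochastic}. First I would isolate the effective driving term. Because of the geometric averaging induced by momentum, the cumulative increment over one unit of interpolated time is, to leading order, $1/(1-m)$ times a running average of the subgradient estimates $Y_{k,i}$; exactly as in the proof of Theorem~\ref{thrm:main_convergence}, this averages --- via Assumptions~\ref{as:delaymean} and~\ref{as:bias} --- to a selection $g_i(\hat x)\in\partial_i f(\hat x)$, rescaled by the mean inter-update rate $\bar u_i$. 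I would therefore introduce the conditional log-moment generating function
\[
H_i(x,\theta)=\lim_{n}\frac{1}{n}\log \mathbb{E}\Bigl[\exp\Bigl(\theta^{\top}\!\!\sum_{k=0}^{n-1}(1-m)\sum_{j=0}^{k}m^{j}Y^{\sigma}_{k-j,i}\Bigr)\Bigr],
\]
which exists and is finite because Assumption~\ref{as:subg}(iii) gives $\|g\|\le B_g$ with probability one, so all exponential moments are uniformly bounded while the martingale and bias contributions wash out in the limit. The local rate function is its Legendre--Fenchel transform $L_i(x,\alpha)=\sup_{\theta}[\langle\theta,\alpha\rangle-H_i(x,\theta)]$, and I set $\overline{S}(x,T,\phi)=\int_0^T L_i(\phi(s),\dot\phi(s))\,\mathrm{d}s$ for absolutely continuous $\phi$ with $\phi(0)=x$, and $\overline{S}=+\infty$ otherwise.

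Properties (i) and (ii) then reduce to standard convex-analytic facts. As the convex conjugate of a convex function vanishing at $\theta=0$, the map $L_i(x,\cdot)$ is nonnegative, which gives (ii); and $L_i(x,\alpha)=0$ precisely when $\alpha$ lies in the subdifferential $\partial_\theta H_i(x,0)$. I would identify this set with the mean drift $\overline{b}(x)$, which by the averaging argument coincides with the set-valued right-hand side $\bar u_i(x)^{-1}\partial_i f(x)$ appearing in Theorem~\ref{thrm:main_convergence}; integrating over $[0,T]$ shows $\overline{S}(x,T,\phi)=0$ iff $\dot\phi\in\overline{b}(\phi)$ almost everywhere with $\phi(0)=x$, which is (i).

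For (iii) I would combine lower semicontinuity with coercivity. Convexity of $\theta\mapsto H_i(x,\theta)$ makes $L_i(x,\cdot)$ convex and lower semicontinuous, hence $\overline{S}(\cdot,T,\cdot)$ is lower semicontinuous on $C[0,T]$; the uniform bound $\|g\|\le B_g$ forces $H_i(x,\theta)$ to grow at most linearly in $\theta$, so that $L_i(x,\alpha)=+\infty$ once $|\alpha|>B_g/(1-m)$. Every $\phi$ in the level set $\{\overline{S}\le s\}$ is therefore uniformly Lipschitz, and Arzel\`a--Ascoli together with the closed constraint $\phi(0)\in F$ delivers compactness.

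The crux is the large-deviations upper bound (iv), which I expect to be the main obstacle. Here I would run the exponential change-of-measure / exponential-martingale argument of \cite[Chapter 6]{kushner2003stochastic}, but three features of the asynchronous model must be handled. First, the increments are bounded yet not i.i.d.: the block/delay law must be controlled through the conditional expectations $\mathbb{E}^\sigma_{k,i}$, and the irreducibility of block selection (positive conditional selection probability) is what guarantees that the \emph{same} averaged Hamiltonian $H_i$ governs every sample path, so the tilted measure is well defined uniformly over $F$. Second, the delays $d^k_i\le\delta$ couple increments across a bounded window, but Lemma~\ref{lem:gammadbound} shows this window is negligible on the $\gamma$-timescale, letting me replace the delayed subgradients by $g_i(\hat x^\sigma_i)$ inside the exponential estimate at asymptotically no cost. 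Third, the momentum geometric sum must be resummed to the factor $1/(1-m)$ before tilting, exactly as in the proof of Theorem~\ref{thrm:main_convergence}, so that it does not distort the Hamiltonian. A Bernstein/Azuma bound on the martingale contribution then yields $\gamma_{p_i(\sigma),i}\log P\bigl(\hat x^\sigma_i(\cdot)\notin N_h(\Phi(\hat x^\sigma_i(0),T,s))\,\big|\,\mathcal{F}_{p_i(\sigma),i}\bigr)\le -s+o(1)$, uniformly in $\hat x^\sigma_i(0)\in F$ by the uniform boundedness of the increments; sending $\sigma\to\infty$ (equivalently $\gamma\to0$) gives (iv).
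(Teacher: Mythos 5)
Your construction of $\overline{S}$ as the Freidlin--Wentzell action functional --- the Legendre transform of a limiting Hamiltonian, with properties (i)--(iii) from convex duality and Arzel\`a--Ascoli and property (iv) from an exponential change of measure --- is precisely the content of the results the paper invokes, since its entire proof consists of the observation that the bounded-noise assumption (Assumption 2.2) lets Theorems 4.1 and 5.3 of Dupuis and Kushner apply without modification; you have taken the same route, only unpacked. The one quibble is quantitative: the momentum prefactor $(1-m)$ cancels the geometric weight $1/(1-m)$ in the resummed increment, so the Lipschitz bound forced on the level sets in (iii) should be $B_g$ rather than $B_g/(1-m)$ --- harmless here, since any finite bound suffices for compactness.
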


\begin{proof}
Since the noise is bounded by Assumption 2.2, \cite[Theorem 4.1]{dupuis1989stochastic} and \cite[Theorem 5.3]{dupuis1989stochastic} hold without any changes. 
\end{proof}
The next step is to modify \cite[Theorem 3.1]{dupuis1989stochastic}. The main change is to replace \cite[Assumption 2.2]{dupuis1989stochastic} with the existence of a solution to the DI in Theorem 3.1 and an invariant set. The impact of this change on the proof is to replace any mention of the trajectory lying in a ball around a limit point with a ball around the invariant set. The result of this modification is then that for each $i$ under Theorem 3.1 as $\sigma \rightarrow \infty$, $\hat{x}_i^{\sigma}(\cdot)$ converges to an invariant set of the DI in Theorem 3.1 with probability one. 

\subsection{Properties of the Limit Point}\label{app:subsec:limpoint}

Finally, we wish to characterize the properties of this invariant set. Training neural networks induces optimization problems with an objective of the form $f(x) = \ell(y_j,a_L)$, where $\ell(\cdot)$ is a standard loss function, and $a_L$---depending on the activation functions from each layer $a_i,~i=1,\ldots,L$---is obtained recursively via $a_i = \rho_i(V_i(x)a_{i-1}),~i = 1,\ldots,L$. These activation functions are typically defined to be piece-wise continuous functions constructed from $\log x$, $e^{x}$, $\max\{0,x\}$ or $\log(1 + e^x)$. In this case, it follows from \cite[Corollary 5.11]{davis2018stochastic} that the set of invariants $\{x^*\}$ satisfy $0 \in \partial f(x^*)$. Moreover, for any iterative algorithm generating $\{x^k\}$ such that $x^k \rightarrow x^*$, the values $f(x^k)$ converge to $f(x^*)$. 

Theorem 3.1 and Theorem 3.2 provide guarantees on convergence to block-wise stationary points; i.e., for each $i$, $0 \in \partial_i f(x)$. As such, every stationary point is also block-wise stationary, which implies that $0\in\partial f(x)$ implies $0\in\partial_i f(x)$ for all $i$. 

To ensure convergence to a stationary point, a variant of Algorithm~\ref{alg:assd} can be used, in particular \asm, where every core updates the entire vector (no block partitioning), but locks the shared memory whenever the core either reads or writes. The analysis in this Section also applies to this case. In particular, $i^k=\{1,...,n\}$ for all $k$ 
and every limit of $x^\sigma(t)$ as either $\sigma\to\infty$ or $t\to\infty$ is a critical point of
$f(x)$ and, with probability one, asymptotically the algorithm converges to a critical point of $f(x)$ (i.e., $x$ such that
$0\in\partial f(x)$).

\section{Experiment Description and Additional Numerical Results}\label{app:sec:experiment}
\paragraph{System Detail.} Referring to the settings \textbf{S1} and \textbf{S2} as in Section 4 of the main submission, the following detail apply:
\begin{itemize}[leftmargin=0.1cm,align=left,labelwidth=\parindent,labelsep=4pt]
	\item [\textbf{S1}]: A 
	workstation with a single socket Intel(R) Xeon(R) CPU E5-1650 v4 running @ 3.60 GHz with 6 physical cores amounting to 12 logical cores with hyperthreading. A single Nvidia GeForce RTX 2080 Ti GPU is connected to the motherboard via a PCIe GEN 1@16x interface which allows data transfer speed up to 4 GB/s. We use 4 concurrent processes with 3 threads each to fetch data from the main-memory to the GPU in \sgd and for asynchronous methods each process uses 3 threads and no multiprocessing for data transfer. Clearly, the CPU resources are optimally allocated, which we check before starting the experiments. The system has 64 GB main memory sufficient to buffer the entire dataset for CIFAR10/CIFAR100/SVHN. It runs on Ubuntu 18.04.4, Linux 4.15.0-101-generic operating system. 
	\item [\textbf{S2}]: A NUMA
	workstation with two sockets containing Intel(R) Xeon(R) CPU E5-2650 v4 running @ 2.20 GHz with 12 physical cores amounting to 24 logical cores apiece with hyperthreading. 10 Nvidia GeForce RTX 1080 Ti GPUs are connected to the same motherboard via PCIe GEN 3@16x interfaces which can enable data transfer between the main-memory and GPU up to 16 GB/s. We use 2 concurrent processes with 2 threads each to fetch data from the main-memory to the GPU by decentralized \sgd processes running on each of the GPUs; for asynchronous methods each parameter server process uses 2 threads and no multiprocessing for data transfer, whereas its slaves utilize 2 concurrent processes with 2 threads each for the same. The system has 256 GB main memory. It runs on Debian GNU/Linux 9.12 operating system.   
\end{itemize}

\paragraph{Restricted Backpropagation.} Consider the shared-memory system with $c$ concurrent processes and a model of size $w$ with requirement of $F$ flops for a backpropagation over its computation graph. As described before, we distribute roughly $w/c$ parameters as in a partition to each of the processes. Now, the process assigned with the partition that includes the input layer, has to pay the cost of full backpropagation i.e. $F$. Traversing from the partition that includes the input layer to the one that includes the output layer, the saving by the processes would be $0, \frac{F}{c}, \frac{2F}{c}, \ldots, \frac{(c-1)F}{c}$ flops per backpropagation. Summing it up, we have total savings of $F\frac{(c-1)}{2}$ flops in $c$ concurrent backpropagation steps by $c$ processes. It is imperative that on average \pasm saves roughly $\frac{3F}{8}$ flops per backpropagation carried out by 4 concurrent processes in our experiments.


\paragraph{Additional Experiment.} Now, we present an example in which even \pasm itself recovers the classification accuracy achieved by the baseline, see Figure \ref{fig:svhnrn34} and the description thereof.
\begin{figure*}[t]
\centering
\adjustbox{valign=b}{\subfigure[Performance Summary.]{%
\scriptsize\vspace{0pt}
\begin{tabular}{p{0mm}p{17mm}|p{3.mm}p{3.5mm}p{3.5mm}p{3.5mm}p{3.5mm}}
	\toprule\scriptsize
	{} & Algo & Train{\newline}Loss & Train{\newline}Acc@1 & Test{\newline}Loss & Test{\newline}Acc@1 & Time{\newline}(Sec) \\
	\midrule
(1) &          \asm &     0.001 &     100.00 &    0.234 &     93.95 &     1332 \\
(2) &         \pasm &     0.029 &      99.47 &    0.221 &     \textbf{93.75} &      \textbf{726} \\
(3) &           \sgd &     0.001 &     100.00 &    0.267 &     93.30 &      964 \\
(4) &           \sgd (BS:1024) &     0.001 &     100.00 &    0.246 &     93.83 &     1366 \\
\bottomrule
\end{tabular}

}}
\hfill
\adjustbox{valign=b}{\subfigure[Train Loss.]{%
\includegraphics[width=0.25\textwidth]{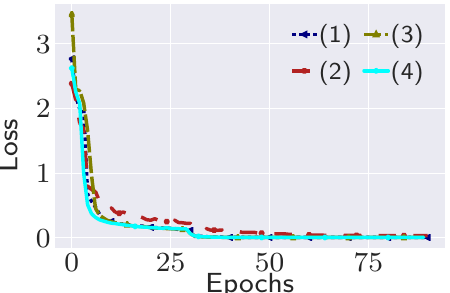}
}}
\hfill
\adjustbox{valign=b}{\subfigure[Top1 Val Accuracy.]{%
	\includegraphics[width=0.25\textwidth]{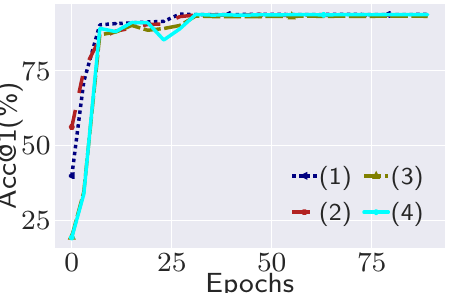}
}}\caption{\small We consider Resnet34 \cite{he2016deep} with 21289802 parameters, training over SVHN \cite{netzer2011reading} dataset. This training experiment has a relatively small memory footprint of $\sim$1.3 GB with BS=256 for \sgd. We train it on the setting \textbf{S1}, see Section 4 in the main submission. The asynchronous methods train with 4 concurrent processes and compute subgradients at BS=256. The baseline minibatch \sgd too computes subgradients at BS=256, whereas the large minibatch method computes subgradients at BS=1024. We give LR warm-up to the large minibatch method. The initial LR, weight-decay and momentum are identical across the methods. Each of the training instances are run for 90 epochs. The LR is dampened by $\frac{1}{10}$ at 30, 60 and 80 epochs and no other variation. In this case, \pasm itself was sufficient to recover the generalization accuracy at par with the baseline and outperforming the large minibatch training. Compared to the large minibatch method, \pasm provides speed-up of 1.33x, whereas compared to the baseline its speed-up is 1.88x. This example shows that in those cases, where we have a relatively deeper model and relatively smaller image sizes -- SVHN dataset contains 73257 training images and 26032 test images of small cropped digits of size 32x32, \pasm itself is sufficient to train the model with classification accuracy at par with the baseline without applying the heuristic approach.}\label{fig:svhnrn34}
\end{figure*}


\paragraph{Hyperparameters' Details.} We had already mentioned the minibatch-size and the number of processes or parameter-servers used in the experiments before. The remaining hyperparameters of the experiments are provided in Table \ref{tab:hp}.
\begin{table}[!h]
	\small
	\tabcolsep=0.19cm
	\renewcommand{\arraystretch}{1.5}
\begin{tabular}{|c|c|c|c|c|c|c|}
	\hline
Experiment & Epochs	& Init. LR & $\gamma$  & Dampening at & Weight decay & \pasmp switch at \\
	\hline
\textbf{S1}: ResNet20/CIFAR10/100	& 300  &  0.1  & 0.1 & 150, 225 & 0.0005 & 75, 135, 165, 210, 225 \\
	\hline
\textbf{S1}: Shufflenet/CIFAR100	& 200  &  0.1  & 0.2 & 60, 120, 160 & 0.0001 & 30, 50, 70, 110, 130, 150, 170 \\
\hline
\textbf{S1}: WideResNet16x8/CIFAR10	& 200  &  0.1  & 0.2 & 60, 120, 160 & 0.0001 & 30, 50, 70, 110, 130, 150, 170 \\
\hline
\textbf{S2}: Densenet121/CIFAR100	& 200  &  0.1  & 0.2 & 60, 120, 160 & 0.0001 & 30, 50, 70, 110, 130, 150, 170 \\
\hline
\textbf{S1}: ResNet34/SVHN	& 90  &  0.1  & 0.1 & 30, 60, 80 & 0.0005 & -- \\
	\hline
\end{tabular}\label{tab:hp}\caption{Hyperparameters used in the experiments.}
\end{table}

In each of the experiments, we have a constant momentum of 0.9. \pasmp starts with \asm iterations i.e. computes and applies full subgradient updates, thereafter switches to \pasm and back to \asm at the epoch counts as mentioned in Table \ref{tab:hp}. In large minibatch training, the warm-up procedure starts the learning rate as the initial learning rate, say $\alpha$, of the baseline and gradually increments it to $K\times \alpha$, where $K=\frac{large~minibatch-size}{baseline~minibatch-size}$.

\end{document}